\theoremstyle{definition}
\newtheorem{mydef}{Definition}
\theoremstyle{plain}
\newtheorem{theo}[mydef]{Theorem}
\newtheorem{prop}[mydef]{Proposition}
\newtheorem{problem}{Problem}
\theoremstyle{remark}
\newtheorem{rem}[mydef]{Remark}
\def\mathscr{\EuScript}
\newcommand{\probelement}{q}
\newcommand{\probset}{\cQ}
\newcommand{\phidiv}[2]{D_{\varphi}\np{#1,#2}}
\newcommand{\cH}{\mathcal{H}}
\newcommand{\cM}{\mathcal{M}}
\newcommand{\cQ}{\mathcal{Q}}
\newcommand{\BB}{\mathbb{B}}
\newcommand{\NN}{\mathbb{N}}
\newcommand{\RR}{\mathbb{R}}
\newcommand{\WW}{\mathbb{W}}
\newcommand{\XX}{\mathbb{X}}
\newcommand{\abs}[1]{\left|#1\right|}                       
\newcommand{\np}[1]{(#1)}                                   
\newcommand{\bp}[1]{\big(#1\big)}                           
\newcommand{\Bp}[1]{\Big(#1\Big)}                           
\newcommand{\Bgp}[1]{\Bigg(#1\Bigg)}                        
\newcommand{\nc}[1]{[#1]}                                   
\newcommand{\bc}[1]{\big[#1\big]}                           
\newcommand{\Bc}[1]{\Big[#1\Big]}                           
\newcommand{\nce}[1]{[\![#1]\!]}                                       
\newcommand{\na}[1]{\{#1\}}                                 
\newcommand{\ba}[1]{\big\{#1\big\}}                         
\newcommand{\Ba}[1]{\Big\{#1\Big\}}                         
\newcommand{\Bga}[1]{\Bigg\{#1\Bigg\}}                      
\newcommand{\argmin}{\mathop{\arg\min}}                     
\newcommand{\tribu}[1]{\mathscr{#1}}                        
\newcommand{\omeg}{\Omega}                                  
\newcommand{\trib}{\tribu{F}}                               
\newcommand{\prbt}{p}                              
\newcommand{\espe}{\mathbb{E}}                              
\newcommand{\mesr}{\mathbb{F}}                              
\def\va@a{\boldsymbol{\va@arg^{\textstyle\text{\unboldmath$\scriptstyle\va@expo$}}_{\textstyle\text{\unboldmath$\scriptstyle\va@index$}}}}
\def\va#1{\def\va@expo{}\def\va@index{}\def\va@arg{\uppercase{#1}}%
  \@ifnextchar^{\va@h}{\@ifnextchar_\va@u\va@a}}
\def\va@h^#1{\def\va@expo{#1}\@ifnextchar_\va@hu\va@a}
\def\va@u_#1{\def\va@index{#1}\@ifnextchar^\va@uh\va@a}
\def\va@hu_#1{\def\va@index{#1}\va@a}
\def\va@uh^#1{\def\va@expo{#1}\va@a}
\newcommand{\normdelim}[1]{\nc{#1}}                         
\newcommand{\bigdelim}[1]{\bc{#1}}                          
\newcommand{\Bigdelim}[1]{\Bc{#1}}                          
\newcommand{\vardelim}[1]{\left(#1\right)}                  
\newcommand{\nesp}[2]{\espe_{#1}\normdelim{#2}}           
\newcommand{\nmes}[2]{\mesr_{#1} \normdelim{#2}}           
\newcommand{\bmes}[2]{\mesr_{#1} \bigdelim{#2}}            
\newcommand{\Bmes}[2]{\mesr_{#1} \Bigdelim{#2}}            
\newcommand{\proscal}[2]{\left\langle#1\:,#2\right\rangle}  
\newcommand{\nnorm}[1]{\|#1\|}                              
\newcommand{\dom}{\mathop{\mathrm{dom}}}                    
\def\eqsepv{\; , \enspace}                                  
\def\eqfinv{\; ,}                                           
\def\eqfinp{\; .}                                           
\title{General Risk Measures\\for Robust Machine Learning}
\author[ ]{Émilie Chouzenoux\textsubscript{a}\thanks{emilie.chouzenoux@centralesupelec.fr}}
\author[ ]{Henri Gérard\textsubscript{b}\thanks{hgerard.pro@gmail.com}}
\author[ ]{Jean-Christophe Pesquet\textsubscript{a}\thanks{jean-christophe@pesquet.eu}}
\affil[ ]{\textsubscript{a}\small \it CentraleSup\'elec, Inria Saclay, Universit\'e Paris-Saclay
\\
Center for Visual Computing\\
Gif sur Yvette, 91190, France}
\affil[ ]{\textsubscript{b}\small \it Université Paris-Est, CERMICS (ENPC), Labex Bézout\\
6-8 avenue Blaise Pascal\\
Champs-sur-Marne, 77420, France}
\date{}
\begin{document}

\maketitle

\begin{abstract}
A wide array of machine learning problems are formulated as the minimization of the expectation of a convex loss function
on some parameter space. Since the probability distribution of the data of interest is usually unknown, it is 
is often estimated from training sets, which may lead to poor out-of-sample performance. 
In this work, we bring new insights in this problem by using the framework which has been developed in quantitative finance for risk measures. We show that the original min-max problem can be recast as
a convex minimization problem under suitable assumptions. We discuss several important examples of
robust formulations, in particular by defining ambiguity sets based on $\varphi$-divergences and the Wasserstein metric.
We also propose an efficient algorithm for solving the corresponding convex optimization problems involving 
complex convex constraints. Through simulation examples, we demonstrate that this algorithm scales well
on real data sets.
%
\end{abstract}

\textbf{Keywords:} Risk measures, robust statistics, machine learning, convex optimization, divergences, Wasserstein distance.


\section{Introduction}
In machine learning,  the robustness of the solutions obtained for classification 
and prediction tasks remains a main issue.
In \citet*{papernot2016transferability} and~\citet*{kurakin2016adversarial},
some examples are provided  where small modifications of the input data
can completely alter the resulting solution. 
In \citet*{feng2014robust} and~\citet*{plan2013robust}, poor out-of-sample
performances are displayed when training data is sparse.
This kind of problems also occurs in optimal control
when there exist uncertainties on parameters. In~\citet*{ben2000robust},
the authors showed that a small perturbation on the parameters
can turn a feasible solution into an infeasible one.

In this context, robust approaches appear as a way of
controlling out-of-sample performance. There is an extensive literature
dealing with robust problems and the reader is refered to~\citet*{ben2009robust}
for a survey.
One of the main approaches consists of introducing  constraints on the probability distribution of the unknown data.
Under some conditions, this approach is equivalent
to deal with ambiguity sets or a modified loss function.
The works in 
\citet*{ben2013robust,hu2013kullback,duchi2016statistics,moghaddam2016robust}
and \citet*{namkoong2016stochastic} have brought more insight
on ambiguity sets. In~\citet*{esfahani2015data} and~\citet*{esfahani2017data},
the authors present a distributionally robust optimization framework based on the
Wasserstein distance. A set of probability distributions is defined
as a ball centered on the reference probability with respect to the
Wasserstein distance, then the optimization is carried out for the worst cost over this
probability set.

This idea of minimizing the worst cost over a given probability set is well-known
in quantitative finance. The robust representation of risk measures
provides a theoretical framework to do so. A rich class of risk measures
is the class of coherent ones which
were introduced in the seminal paper by~\citet*{artzner1999coherent}.
In \citet*{follmer2016stochastic}, a broader class of so-called convex risk measures was investigated, 
for which a large number of results were established.


In this paper, we follow the line of~\citet*{esfahani2015data},
which aims at reformulating robust problems using ambiguity sets as 
convex minimization problems.
Our contribution is threefold. First we clarify the links
existing between risk measures and robust optimization.
This allows us to transpose results from finance to machine learning.
Second, we propose a unifying convex optimization setting for dealing
with various risk measures, including those based on $\varphi$-divergences or the Wasserstein distance.
Finally, we propose an accelerated algorithm grounded on the subgradient projection method proposed
in \citet*{combettes2003block}. We show that the proposed algorithm is able to solve efficiently large-scale robust problems.

The organization of the paper is as follows.
In Section~\ref{section_problem_statement}, we state the general mathematical problem
we investigate in the context of machine learning.
In Section~\ref{section_risk_measures}, we first draw a parallel
between this problem and convex monetary risk measures. 
We then provide a convex reformulation of the problem.
In Section~\ref{section_literature}, we discuss some important classes of risk measures 
by revisiting some of the results in the literature.
In Section~\ref{section_numerical_experiments},
we describe our algorithm for solving convex formulations of robust problems.
Then, in Section~\ref{se:sim}, we illustrate the good performance of the proposed algorithm through numerical experiments
on real datasets. Finally, short concluding remarks are made in Section \ref{se:conclu}.


\section{Problem statement}\label{section_problem_statement}
Let $\np{\omeg, \trib, \prbt}$ be the underlying probability space
where $\omeg$ is a finite set of cardinal $N$,
$\trib$ is the $\sigma$-field generated by $\np{\na{\omega}}_{\omega \in \omeg}$,
and $\prbt$ is a probability distribution that is assumed to charge all points.
Let $d$ be a nonzero integer and let $\mathbf{z} \colon \omeg \to \RR^{d}$ denote a general random variable.
Note that function $\mathbf{z}$ can be identified with a matrix in $\RR^{N\times d}$
where, for every $i \in \nce{1,N}$, $z_{i}\in \RR^{d}$ is the vector corresponding to the $i$-th line of matrix $\mathbf{z}$ .
We denote by $\cM_{1}$ the set of probability distributions over $\np{\omeg, \trib, \prbt}$.

For every  $i \in \nce{1,N}$, let $\ell(\cdot,z_{i})\colon \RR^n \to \RR \cup \na{+\infty}$ be a loss function which is assumed to
be lower semicontinuous (lsc) and convex such that
\begin{equation}\label{eq_loss_prop}
\bigcap_{i=1}^{N}\dom\big(\ell(\cdot,z_{i})\big) \neq \varnothing\eqfinv
\end{equation}
where $\dom(g)$ denotes the domain of a function $g$, that is the set of argument values for which this function is finite.
In standard formulations of machine learning problems, one aims at finding an optimal regression vector $\overline{\theta} \in \RR^n$ such that
\begin{equation}\label{e:classform}
\overline{\theta} \in \argmin_{\theta \in \RR^{n}}
        \sum_{i = 1}^{N}
        \prbt_{i}
        \ell \np{\theta, z_{i}}
    \eqfinp
\end{equation}
Indeed, setting $z_{i} = [x_{i}^\top\;\; y_{i}]^{\top}$ with $n=d-1\ge 1$, $x_{i} \in \RR^{n}$,
and $y_{i}\in \RR$ allows us to recover  a wide array of estimation and classification problems.
For example, penalized least squares regression problems are obtained when
    \begin{equation}\label{eq_penalized_loss_function}
    (\forall i \in\nce{1,N})(\forall \theta\in \RR^{n})\quad
        \ell\np{\theta,z_{i}} =
        \frac{1}{2} \nnorm{y_i - x_{i}^{\top} \theta}^2 + \rho\np{\theta}
        \eqfinv
    \end{equation}
where $\rho\colon \RR^{n} \to \RR \cup \na{+\infty}$ is a proper, lsc, convex penalty function.
If the random variable $\mathbf{y}$ is $\na{0,1}$-valued, we can recover binary classification problems, 
for example by performing a logistic regression, i.e.
    \begin{equation}\label{e:logreg}
     (\forall i \in\nce{1,N})(\forall \theta\in \RR^{n})\quad
           \ell\np{\theta, z_i}
            =
            \log\Bp{
                1+\exp\bp{-y_{i}x_{i}^{\top} \theta}
            }
        \eqfinp
    \end{equation}
 One of the main limitations of this formulation is that it assumes that the true probability distribution of the data is perfectly known.
 In practice, this distribution is often estimated empirically from the available observations. 

In this paper, we will focus on the following more general robust formulation to 
determine  an optimal regression vector.
\begin{problem}  
\label{initial_problem_generic}
Let $\alpha\colon \RR^N \to \RR \cup \na{+\infty}$ be a lsc convex
penalty function whose domain is a nonempty subset of $\cM_{1}$.
We want to find
\begin{equation}\label{eq_initial_problem_generic}
    \overline{\theta} \in \argmin_{\theta \in \RR^{n}}
    \sup_{\probelement= (\probelement_{i})_{1\le i \le N} \in \cM_{1}}
    \vardelim{
        \sum_{i = 1}^{N}
        q_{i}
        \ell \np{\theta, z_{i}}
        -\alpha\np{\probelement}
    }
    \eqfinv
 \end{equation}
\end{problem}

In this problem, if $\alpha$  is the indicator function $\iota_{\na{\prbt}}$
of the singleton containing a probability distribution $\prbt$,
then \eqref{e:classform} is recovered.\footnote{The indicator of a set is the function equal to
0 on this set and $+\infty$ otherwise.} 
More generally, if $\alpha$ is
equal to the indicator function of a nonempty closed convex set $\probset \subset \cM_{1}$,
then the objective function in \eqref{eq_initial_problem_generic} reduces to
\begin{equation}
    \sup_{q\in \probset} \sum_{i=1}^N q_i \ell\np{\theta,z_i}
    =
    \sigma_{\probset}\bp{\mathbf{\ell}\np{\theta,\mathbf{z}}}
    \eqfinv
\end{equation}
where $\sigma_{\probset}$ is the support function of $\probset$. 
This corresponds to the well-known case
of distributionally robust optimization using ambiguity sets \cite{esfahani2015data}.

\section{Convex formulation of robust inference problems using risk measures}\label{section_risk_measures}
In this section, we address Problem~\ref{initial_problem_generic}
in the light of the financial framework for monetary risk measures.
We first recall known properties of risk measures and 
then show how Problem~\ref{initial_problem_generic}
can be reformulated as a convex problem.

\subsection{Definition and properties of a risk measure}
Let $\XX$ be the space of real-valued random variables
defined on the probability space $\np{\omeg,\trib,\prbt}$.
We denote by $\va{X}$ a generic element of $\XX$ and
we recall that $\prbt$ is assumed to be a distribution that charges all points.
The space $\XX$ is endowed with the pointwise order $\leq$, that is,
\begin{equation}\label{eq_def_order}
(\forall (\va{X},\va{Y})\in \XX^2)\quad
    \va{X} \leq \va{Y}
    \Leftrightarrow (\forall \omega \in \omeg)\;\;
    \va{X}\np{\omega} \leq \va{Y}\np{\omega}    
    \eqfinp
\end{equation}
A \emph{risk measure} $\mesr$ is a real-valued function
$
    \mesr : \XX \to \RR
    \eqfinp
$\\
The next four properties of risk measures were first introduced in~\citet*{artzner1999coherent}
to define the so called \emph{coherent risk measures}. The interested reader can also
refer to~\citet*{follmer2016stochastic}[Part I, Chapter 4].

\begin{mydef}
    A risk measure $\mesr \colon \XX \to \RR$ is said to be
    \begin{itemize}
        \item \emph{monotone:} if, for every $(\va{X},\va{Y})\in \XX^2$, $\va{X} \leq \va{Y} \Rightarrow \nmes{}{\va{X}} \leq \nmes{}{\va{Y}}$,
        \item \emph{translation invariant:} if, for every $\va{X}\in \XX$ and $m \in \RR$, $\nmes{}{\va{X} + m} = \nmes{}{\va{X}} + m$,
        \item \emph{convex:} if, for every $(\va{X},\va{Y})\in \XX^2$ and $\lambda \in ]0,1[$, $\bmes{}{\lambda \va{X} + \np{1-\lambda}\va{Y}} \leq \lambda\nmes{}{\va{X}} + \np{1-\lambda}\nmes{}{\va{Y}}$,
        \item \emph{positively homogeneous:} if, for every $\va{X}\in \XX$ and $\lambda \in [0,+\infty[$,
        $\nmes{}{\lambda \va{X}} = \lambda\nmes{}{\va{X}}$.
    \end{itemize}
    A risk measure which satisfies the first two properties is called a \emph{monetary risk measure}.
    A risk measure which satisfies the first three properties is called a \emph{convex risk measure}.
    A risk measure which satisfies the four properties is called a \emph{coherent risk measure}.
\end{mydef}

Depending on the author, the first axiom may also be expressed as:
for every $(\va{X},\va{Y})\in \XX^2$,
$\va{X} \leq \va{Y} \Rightarrow \nmes{}{\va{X}} \geq \nmes{}{\va{Y}}$
if the variables $\va{X}$ and $\va{Y}$ are interpreted as gains instead of a losses, 
which is a common in finance.
For this reason, some sign differences may appear between results of various authors.
We have chosen to follow the paths
in~\cite{rockafellar2000optimization,ruszczynski2006optimization,ruszczynski2006conditional}
and interpret the random variable in argument as a loss.
We however often refer to~\cite{follmer2016stochastic}, providing a comprehensive view of risk measures,
where the opposite convention has been adopted.

\begin{rem}\  \label{re:risk}
\begin{enumerate}
 \item\label{re:riski}    It readily follows from the translation invariance property that a monetary risk measure $\mesr_{}$ admits a primal
    form given by
    \begin{equation}\label{eq_repres_acceptance}
    (\forall \va{X}\in \XX)\quad
        \nmes{}{\va{X}}
        =
        \inf_{s \in \RR}
        \na{s \mid \va{X} -s \in \operatorname{lev}_{\le 0} \mesr_{}}
        \eqfinv
    \end{equation}
    where 
    $\operatorname{lev}_{\le 0} \mesr_{}$ is the lower level set of $\mesr_{}$ at height 0 defined as
       \begin{equation}\label{e:deflevset0}
        \operatorname{lev}_{\le 0} \mesr_{}
        =
        \ba{\va{X} \in \XX \mid \bmes{}{\va{X}} \leq 0}
        \eqfinp
    \end{equation}
\item\label{re:riskii}  
    A monetary risk measure $\mesr$ is $1$-Lipschitz continuous
    with respect to the supremum norm $\nnorm{\cdot}_{\infty}$. Indeed, 
    for every $(\va{X},\va{Y})\in \XX^2$,
    we have
    $\va{X} \leq \va{Y} + \nnorm{\va{X} - \va{Y}}$. By monotonicity and translation invariance
    we obtain that $\nmes{}{\va{X}} -\nmes{}{\va{Y}} \leq \nnorm{\va{X} - \va{Y}}_{\infty}$,
    which by symmetry implies that $|\nmes{}{\va{X}} -\nmes{}{\va{Y}}| \leq \nnorm{\va{X} - \va{Y}}_{\infty}$.
    \end{enumerate}
\end{rem}

The class of convex risk measures includes a large number of useful functions.
Without entering into details, we should mention: expectation, worst case,
quantile, median, and average value at risk \cite{follmer2016stochastic}.

\subsection{Convex reformulation}
In this section, we will show that the ``min-max'' problem~\ref{initial_problem_generic}
admits a convex reformulation.
We first gather in the following proposition some existing results in the literature.
\begin{prop}\label{prop:preconvex}
$\mesr{}$ is a convex risk measure if and only if there exists a lsc and convex function
$\alpha\colon \RR^N \to \RR \cup \na{+\infty}$ such that
    \begin{equation}
    (\forall \va{X} \in \XX)\quad
        \nmes{}{\va{X}}
        =
        \sup_{\probelement \in \cM_{1}}\left(
        \sum_{i = 1}^{N}
        \probelement_{i}
        x_{i}
        - \alpha\np{\probelement}\right)
        \eqfinp
    \end{equation}
    The function $\alpha$ associated with $\mesr{}$ is uniquely defined as 
            \begin{equation} \label{eq_def_alpha}
           (\forall \probelement \in \RR^{N}) \quad \alpha\np{\probelement}
             =
             \begin{cases}
            \displaystyle\sup_{\va{X} \in \operatorname{lev}_{\le 0} \mesr_{}
            }
            \nesp{\probelement}{\va{X}} & \mbox{if $\probelement \in \cM_{1}\eqfinv$}\\
            +\infty & \mbox{otherwise}\eqfinp
            \end{cases}
        \end{equation}
      In addition, $\mesr{}$ is coherent if and only if its conjugate function $\alpha$ is the indicator function of a nonempty closed convex subset of $\cM_{1}$.

\end{prop}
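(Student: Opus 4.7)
The plan is to identify $\XX$ with $\RR^N$ (possible since $\omeg$ has $N$ points and $\prbt$ charges every point), equip it with the supremum norm, and treat the statement as an instance of Fenchel–Moreau biconjugation applied to $\mesr$. The whole proposition is essentially: a convex risk measure coincides with its biconjugate, and the conjugate is concentrated on $\cM_1$.

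For the main equivalence, I would first prove the $(\Leftarrow)$ direction, which is routine: if $\mesr$ is defined by the displayed formula with an lsc convex $\alpha$ whose domain lies in $\cM_1$, then monotonicity follows from $q_i\ge 0$, convexity from the fact that $\mesr$ is a pointwise supremum of affine maps, and translation invariance from $\sum_i q_i=1$. For the $(\Rightarrow)$ direction, the key point is that Remark \ref{re:risk}\eqref{re:riskii} gives $1$-Lipschitz continuity of $\mesr$ on $(\XX,\nnorm{\cdot}_\infty)$, so $\mesr$ is proper, lsc and convex on $\RR^N$. Fenchel–Moreau then gives $\mesr=\mesr^{**}$, and it only remains to identify $\mesr^{*}$ with $\alpha$, i.e.\ to show that $\mesr^{*}(q)=+\infty$ unless $q\in\cM_1$. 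This is the step I expect to be the main obstacle, and I would handle it by two ``test-function'' arguments: plugging the constant random variable $\va{X}\equiv m$ into the conjugate and using translation invariance yields $\mesr^{*}(q)\ge m(\sum_i q_i-1)-\mesr(0)$, which blows up as $m\to\pm\infty$ unless $\sum_i q_i=1$; plugging $\va{X}=-t\mathbf{1}_{\{\omega_i\}}$ for $t\to+\infty$ and using monotonicity (comparison with $0$) forces $q_i\ge 0$ for every $i$.

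For the explicit formula \eqref{eq_def_alpha} of $\alpha$ on $\cM_1$, I would rely on the primal representation \eqref{eq_repres_acceptance} of the monetary risk measure $\mesr$. Writing $\cA=\operatorname{lev}_{\le 0}\mesr$ and parametrizing $\va{X}=\va{Y}+s$ with $\va{Y}\in\cA$ and $s\ge\mesr(\va{X})$, the linearity of $\espe_q$ together with $\sum_i q_i=1$ gives
\begin{equation*}
\alpha(q)=\sup_{\va{X}\in\XX}\bp{\nesp{q}{\va{X}}-\nmes{}{\va{X}}}
=\sup_{\va{Y}\in\cA,\,s\in\RR}\bp{\nesp{q}{\va{Y}}+s-s}
=\sup_{\va{Y}\in\cA}\nesp{q}{\va{Y}}\eqfinv
\end{equation*}
which is exactly the right-hand side of \eqref{eq_def_alpha}. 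Uniqueness then follows because $\alpha$ is nothing but the Fenchel conjugate of $\mesr$, which is determined by $\mesr$.

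For the last statement, I would use the standard fact that a proper lsc convex function on $\RR^N$ is positively homogeneous if and only if its Fenchel conjugate takes only the values $0$ and $+\infty$, i.e.\ is the indicator function of a closed convex set. In one direction, positive homogeneity of $\mesr$ and the scaling $\va{X}\mapsto\lambda\va{X}$ in the definition of $\alpha=\mesr^{*}$ force $\alpha(q)\in\{0,+\infty\}$; in the other, if $\alpha=\iota_{\probset}$ for some closed convex $\probset\subset\cM_1$, then $\mesr(\va{X})=\sup_{q\in\probset}\nesp{q}{\va{X}}$ is manifestly positively homogeneous, hence coherent. The nonemptiness of $\probset$ comes from the fact that $\alpha$ cannot be identically $+\infty$ (otherwise $\mesr\equiv-\infty$, contradicting it being real-valued).
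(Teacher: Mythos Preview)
Your proposal is correct. The paper's own proof is essentially a citation: it invokes Theorem~4.16 and Proposition~4.15 of F\"ollmer--Schied for both the dual representation of a convex risk measure and the explicit acceptance-set formula for $\alpha$, and the same reference again for the coherent case; the converse direction is simply declared immediate. You instead supply a self-contained Fenchel--Moreau argument, establishing $\dom\mesr^{*}\subset\cM_1$ via the two test-function probes (constants $m$ for $\sum_i q_i=1$, and $-t\mathbf{1}_{\{\omega_i\}}$ for $q_i\ge 0$) and then identifying $\mesr^{*}$ with the supremum over the acceptance set. Your route is more elementary and avoids the external dependency; the paper's is terser but outsources the work to the textbook.

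One minor wrinkle in your derivation of \eqref{eq_def_alpha}: writing $\va{X}=\va{Y}+s$ with $\va{Y}\in\cA$ gives $\nmes{}{\va{X}}=\nmes{}{\va{Y}}+s$, not $s$, so the ``$+s-s$'' cancellation as you wrote it only goes through when $\nmes{}{\va{Y}}=0$. The clean fix is to take, for each $\va{X}$, the specific choice $s=\nmes{}{\va{X}}$ so that $\va{Y}=\va{X}-s$ satisfies $\nmes{}{\va{Y}}=0$, yielding $\nesp{q}{\va{X}}-\nmes{}{\va{X}}=\nesp{q}{\va{Y}}\le\sup_{\cA}\nesp{q}{\cdot}$; for the reverse inequality, any $\va{Y}\in\cA$ has $\nmes{}{\va{Y}}\le 0$, hence $\nesp{q}{\va{Y}}\le\nesp{q}{\va{Y}}-\nmes{}{\va{Y}}\le\alpha(q)$.
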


\begin{proof}\ 
\begin{enumerate}
\item
        We know from \citet*[Theorem 4.16 and Proposition 4.15]{follmer2016stochastic}
        that any convex risk measure $\mesr$ on $\XX$ is of the form
        \begin{equation}\label{eq_def_conve_alpha}
        (\forall \va{X} \in \XX)\quad
            \nmes{}{\va{X}}
            =
            \sup_{\probelement \in \cM_{1}}
            \bp{
                \nesp{\probelement}{\va{X}}
                - \alpha\np{\probelement}
            }
            \eqfinv
        \end{equation}
        where $\alpha\colon \RR^N \to \RR \cup \na{+\infty}$ is the lsc and convex function
        whose domain is a nonempty subset of $\cM_{1}$,  given by
        \begin{align}
           (\forall \probelement \in \cM_{1}) \quad \alpha\np{\probelement}
            &=
            \sup_{\va{X} \in \XX}
            \nesp{\probelement}{\va{X}} - \nmes{}{\va{X}}
            \eqfinv\nonumber\\
            &=
            \sup_{\va{X} \in \operatorname{lev}_{\le 0} \mesr_{}
            }
            \nesp{\probelement}{\va{X}}
        \end{align}
(the second equality stems from Remark \ref{re:risk}\ref{re:riski}).

       Conversely, one can associate to every lsc convex function
$\alpha\colon \RR^N \to \RR \cup \na{+\infty}$ whose domain is a nonempty subset of $\cM_{1}$
        a unique convex risk measure defined by~\eqref{eq_def_conve_alpha}.
        
 \item               It follows from \citet*[Proposition 4.15]{follmer2016stochastic}  that if, in addition,
        the risk measure $\mesr$ is coherent, then the function $\alpha$ in~\eqref{eq_def_alpha}
        is the indicator function of a nonempty closed convex subset of $\cM_{1}$ and the converse property holds.
\end{enumerate}
        
\end{proof}

We now state the main result of this section.
\begin{theo}\label{theo_representation}
Let $\alpha\colon \RR^N \to \RR \cup \na{+\infty}$ be a lsc convex function
        whose domain is a nonempty subset of $\cM_{1}$. Problem~\ref{initial_problem_generic} is equivalent to find
    \begin{equation}\label{eq_convex_infpre}
    \overline{\theta}\in
        \argmin_{\theta \in \RR^{n}}
        \bmes{\alpha}{\ell \np{\theta, \mathbf{z}}}
        \eqfinv
    \end{equation}
    where
    \begin{equation}\label{eq_dem_equivalence_problem1}
    (\forall \va{X} \in \XX)\quad
        \nmes{\alpha}{\va{X}}
        =
        \max_{\probelement \in \cM_{1}}\left(
        \sum_{i = 1}^{N}
        \probelement_{i}
        x_{i}
        - \alpha\np{\probelement}\right)
        \eqfinp
    \end{equation}
    The function $\bmes{\alpha}{\ell \np{\cdot, \mathbf{z}}}$ is proper, lsc, and convex.
    In addition, the so-defined convex optimization problem admits a primal formulation which consists of finding
    \begin{equation}\label{eq_convex_inf}
     \inf_{(\theta,s)\in \mathcal{S}} s
    \end{equation}
    where
    \begin{equation}\label{e:defS}
    \mathcal{S} = \{(\theta,s)\in \RR^{n}\times \RR \mid \ell \np{\theta, \mathbf{z}} - s \in 
    \operatorname{lev}_{\le 0} \mesr_{\alpha}
    \}
    \end{equation}
       \eqfinp
  \end{theo}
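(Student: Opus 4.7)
The plan has three ingredients: reinterpret the inner sup as a convex risk measure, transfer convexity/lsc/properness through this risk measure, and then apply the primal representation of a monetary risk measure from Remark~\ref{re:risk}\ref{re:riski}.

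First I would note that, by Proposition~\ref{prop:preconvex}, the function $\alpha$ of the hypotheses defines a unique convex risk measure $\mathbb{F}_\alpha$ via
\[
\mathbb{F}_\alpha[\mathbf{X}]=\sup_{q\in\mathcal{M}_1}\Bigl(\sum_{i=1}^N q_i x_i-\alpha(q)\Bigr),
\]
so that the inner supremum appearing in Problem~\ref{initial_problem_generic} is exactly $\mathbb{F}_\alpha[\ell(\theta,\mathbf{z})]$. To upgrade this sup to a max I would use that $\mathcal{M}_1$ is the probability simplex in $\mathbb{R}^N$ (compact since $\Omega$ is finite), that $q\mapsto\sum_i q_i x_i$ is continuous, and that $-\alpha$ is upper semicontinuous; an usc function on a compact set attains its supremum, hence \eqref{eq_dem_equivalence_problem1}. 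This already yields the equivalence between Problem~\ref{initial_problem_generic} and \eqref{eq_convex_infpre}.

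Next I would establish that $\theta\mapsto\mathbb{F}_\alpha[\ell(\theta,\mathbf{z})]$ is proper, lsc and convex. For each fixed $q\in\mathcal{M}_1$, the map $\theta\mapsto \sum_{i=1}^N q_i\,\ell(\theta,z_i)-\alpha(q)$ is lsc and convex as a nonnegative combination of lsc convex functions (with a constant shift), so the pointwise supremum over $q\in\mathcal{M}_1$ inherits these two properties. Properness follows from assumption~\eqref{eq_loss_prop}: any $\theta_0\in\bigcap_i\dom\bigl(\ell(\cdot,z_i)\bigr)$ makes the vector $\ell(\theta_0,\mathbf{z})\in\mathbb{R}^N$ finite, and $\mathbb{F}_\alpha$ is real-valued on $\mathbb{X}\simeq\mathbb{R}^N$, so $\mathbb{F}_\alpha[\ell(\theta_0,\mathbf{z})]\in\mathbb{R}$.

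Finally, for the primal form I would invoke Remark~\ref{re:risk}\ref{re:riski} applied to the monetary risk measure $\mathbb{F}_\alpha$: for every $\mathbf{X}\in\mathbb{X}$,
\[
\mathbb{F}_\alpha[\mathbf{X}]=\inf\bigl\{s\in\mathbb{R}\ \mid\ \mathbf{X}-s\in\operatorname{lev}_{\le 0}\mathbb{F}_\alpha\bigr\}.
\]
Substituting $\mathbf{X}=\ell(\theta,\mathbf{z})$ and interchanging the infima in $\theta$ and $s$ then rewrites \eqref{eq_convex_infpre} as $\inf_{(\theta,s)\in\mathcal{S}}s$ with $\mathcal{S}$ given by \eqref{e:defS}. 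The only subtle point, and I expect it to be the main obstacle, is ensuring the passage from sup to max (attainment) and the compatibility of the two applications of the variational identity of Remark~\ref{re:risk}\ref{re:riski}; both are handled by the compactness of the simplex combined with the lsc/usc properties noted above.
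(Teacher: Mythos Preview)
Your proposal is correct and follows essentially the same skeleton as the paper: identify the inner sup as a convex risk measure via Proposition~\ref{prop:preconvex}, upgrade sup to max by compactness of the simplex and upper semicontinuity, establish properness from~\eqref{eq_loss_prop}, and obtain the primal form~\eqref{eq_convex_inf} from Remark~\ref{re:risk}\ref{re:riski}.

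The one genuine difference lies in how you prove convexity and lower semicontinuity of $\theta\mapsto\mathbb{F}_\alpha[\ell(\theta,\mathbf{z})]$. You argue directly: for each fixed $q\in\mathcal{M}_1$ the map $\theta\mapsto\sum_i q_i\,\ell(\theta,z_i)-\alpha(q)$ is lsc and convex (nonnegative combination of lsc convex functions plus a constant), and both properties survive a pointwise supremum. The paper instead routes the argument through the risk-measure axioms: it first uses convexity of $\ell(\cdot,z_i)$ to obtain a pointwise inequality in $\mathbb{X}$, then applies monotonicity followed by convexity of $\mathbb{F}_\alpha$; for lower semicontinuity it composes the Lipschitz continuity of $\mathbb{F}_\alpha$ (Remark~\ref{re:risk}\ref{re:riskii}) with monotonicity and the lsc of $\ell(\cdot,\mathbf{z})$. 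Your route is shorter and self-contained, requiring no structural properties of $\mathbb{F}_\alpha$ beyond its definition as a supremum; the paper's route, while slightly longer, showcases that the result is a formal consequence of the monetary/convex risk-measure axioms and would transfer verbatim to any abstract $\mathbb{F}$ satisfying them, even without an explicit dual representation.
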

\begin{proof}
It follows from Proposition \ref{prop:preconvex} that \eqref{eq_initial_problem_generic}  is equivalent to
\eqref{eq_convex_infpre}
        where $\mesr_{\alpha}$ is a convex risk measure. In addition, the $\sup$ in the definition of the risk measure is attained since $\cM_{1}$ is a compact set and $q \mapsto \sum_{i = 1}^{N} \probelement_{i} x_{i}- \alpha\np{\probelement}$ is upper semicontinuous.
        
        The function $\ell\np{\cdot, Z}$ is lsc convex
        for every $Z \in \RR^{d}$. Given a random variable $\mathbf{z}$, for every
        vectors $\theta_{1}$ and $\theta_{2}$ in $\RR^{n}$,
        and scalar $\lambda \in \nc{0,1}$, the convexity of function $\ell$ yields
        \begin{align}
        (\forall \omega\in \omeg)\quad
            \ell \np{\lambda \theta_{1} + \np{1-\lambda}\theta_{2}, \mathbf{z}\np{\omega}}
            \leq
            \lambda \ell \np{ \theta_{1}, \mathbf{z}\np{\omega}}+
            \np{1-\lambda} \ell \np{ \theta_{2}, \mathbf{z}\np{\omega}}
            \eqfinp
        \end{align}
        Now, by using the fact that the risk measure $\mesr_{\alpha}$ is monotone and convex
        with respect to the ordering introduced in~\eqref{eq_def_order}, we get
        \begin{align}
            \Bmes{\alpha}{\ell \np{\lambda \theta_{1} + \np{1-\lambda}\theta_{2}, \mathbf{z}}}
            &\leq
            \Bmes{\alpha}{\lambda \ell \np{ \theta_{1}, \mathbf{z}}
                +\np{1-\lambda} \ell \np{ \theta_{2}, \mathbf{z}}}
            \nonumber\\
            &\leq
            \lambda \Bmes{\alpha}{ \ell \np{ \theta_{1}, \mathbf{z}}}
                +\np{1-\lambda}\Bmes{\alpha}{\ell \np{ \theta_{2}, \mathbf{z}}}
            \eqfinp
        \end{align}
        This shows that $\bmes{\alpha}{\ell \np{\cdot, \mathbf{z}}}$ is convex.
        
        In addition, since $\mesr_{\alpha}{}$ is monotone and continuous (see Remark \ref{re:risk}\ref{re:riskii}) and
         $\ell \np{\cdot, \mathbf{z}}$ is lsc, $\bmes{\alpha}{\ell \np{\cdot, \mathbf{z}}}$ is 
         lsc. Because of \eqref{eq_loss_prop}, $\bmes{\alpha}{\ell \np{\cdot, \mathbf{z}}}$ is also
         proper.
         
         Finally, formulation \eqref{eq_convex_inf} is deduced from \eqref{eq_repres_acceptance} and \eqref{eq_convex_infpre}.
         
\end{proof}

The general convex reformulation~\eqref{eq_convex_inf} is not always easy
to handle. 
In practical applications, the choice of the mapping $\alpha$ plays a crucial role in this regard.
We will see in the next section some useful examples of this function.
 In particular, some mappings $\alpha$ lead to a formulation~\eqref{eq_convex_inf} that will be shown to be tractable numerically.

\section{Examples of risks measures}\label{section_literature}
By considering  particular forms of function $\alpha$ in~Problem \ref{initial_problem_generic},
we define three scenarios of interest for robust formulations.
The first two ones are based on $\varphi$-divergences, while the third one is based on the Wasserstein metric.

\subsection{Perspective functions and divergences}

The notion of $\varphi$-divergence was first introduced independently
by~\cite{csiszar1964informationstheoretische,morimoto1963markov}
and~\cite{ali1966general}. For a more complete bibliography on the subject,
we refer to~\citet*{basseville2013divergence}.

\begin{mydef}
    Let $\varphi: \RR \to ]-\infty, +\infty]$.
    The \emph{perspective function} $f_{\varphi}$ of function $\varphi$
    is given by
    \begin{align}
        f_{\varphi}: \RR \times \RR &\to ]-\infty, +\infty]
        \nonumber\\
        \np{x,\xi} &\mapsto
        \left \{
        \begin{array}{ll}
            \displaystyle \xi \varphi\vardelim{\frac{x}{\xi}}
            &\textrm{ if } \xi > 0
            \eqfinv
            \\
            +\infty
            &\textrm{ otherwise}
            \eqfinp
        \end{array}
        \right .
    \end{align}
\end{mydef}

\begin{mydef}\label{def_phi_divergence}
    Let $\varphi: \RR \to \nc{0, +\infty}$ be a lsc
    convex function with nonempty domain included in $[0,+\infty[$ such that $\varphi\np{1} = 0$.
    The \emph{$\varphi$-divergence}
    $D_{\varphi}\colon \RR^{N} \times \RR^{N} \to \nc{0, +\infty}$
    is defined as
    \begin{equation}
    \big(\forall \prbt = (\prbt_{i})_{1\le i \le N} \in \RR^{N}\big)
    \big(\forall  \probelement = (\probelement_{i})_{1\le i \le N}\in \RR^{N}\big)\quad
        \phidiv{\prbt}{\probelement}
        =
        \sum_{i=1}^{N}
        \underline{f_{\varphi}}
        \np{\prbt_{i},\probelement_{i}}
        \eqfinv
    \end{equation}
    where the function $\underline{f_{\varphi}}$ is the lsc envelope
    of the mapping $f_{\varphi}$, that is
    \begin{align}
        \underline{f_{\varphi}} : \RR\times \RR &\to ]-\infty, +\infty]
        \\
        \np{x,\xi} &\mapsto
        \left \{
        \begin{array}{ll}
            \displaystyle \xi \varphi\vardelim{\frac{x}{\xi}}
            &\textrm{ if } \xi > 0 \textrm{ and } x \geq 0
            \eqfinv
            \\
            \displaystyle x \lim_{t \to +\infty} \frac{\varphi\np{t}}{t}
            &\textrm{ if } \xi = 0 \textrm{ and } x > 0
            \eqfinv
            \\
            0
            &\textrm{ if } \xi = 0 \textrm{ and } x = 0
            \eqfinv
            \\
            +\infty
            &\textrm{ otherwise.}
        \end{array}
        \right .
    \end{align}
\end{mydef}

We also recall the definitions of a conjugate function and an adjoint function.
\begin{mydef}
    Let $\varphi: \RR \to ]-\infty, +\infty]$.
    The conjugate $\varphi^{*}$ of function $\varphi$ is defined by
    \begin{equation}
    (\forall s \in \RR)\quad
        \varphi^{*}\np{s}
        =
        \sup_{t \in \RR}
        \bp{ st - \varphi\np{t}}
        \eqfinv
    \end{equation}
    and the so-called adjoint function of $\varphi$ is defined by
    \begin{equation}
    (\forall t \in \RR)\quad
        \tilde{\varphi}\np{t}
        =
        \begin{cases}
         \displaystyle t \varphi\vardelim{\frac{1}{t}} & \mbox{if $t \geq 0$}\eqfinv\\
        \displaystyle \lim_{t\to +\infty} \frac{\varphi(t)}{t} & \mbox{if $t=0$\eqfinp}
        \end{cases}
    \end{equation}
\end{mydef}

Table~\ref{table_phi_divergence} is an extension of the one in~\citet*{ben2013robust}
and provides the expressions of common $\varphi$ functions, their conjugates, and the associated $\varphi$-divergence.
It is well-known \citep*{ben2013robust,combettes2018perspective}
that the adjoint $\tilde{\varphi}$ of $\varphi$ is such that
\begin{equation}
(\forall (\prbt,\probelement)\in 
(\RR^{N})^{2}) \quad
D_{\tilde{\varphi}}\np{\prbt, \probelement} = \phidiv{\probelement}{\prbt}
\end{equation}
and the conjugate of function $\lambda \varphi$
is
\begin{equation}
(\forall s \in \RR)\quad 
        \np{\lambda \varphi}^{*}\np{s}
        =
        \lambda
        \varphi^{*}\vardelim{\frac{s}{\lambda}}
        \eqfinp
    \end{equation}

\afterpage{
\begin{landscape}
    \begin{table}
        \centering
        \begin{tabular}{|c|c|c|c|c|c|}
            \hline
            Divergence
            &$\varphi\np{t}$
            &$\varphi\np{t}, t \geq 0$
            &$\phidiv{\prbt}{\probelement}$
            &$\varphi^{*}\np{s}$
            &$\tilde{\varphi}\np{t}$
            \\
            \hline
            \hline
            Kullback-Leibler
            &$\varphi_{kl}\np{t}$
            &$t\log\np{t} -t +1$
            &$\sum_{i = 1}^{N}p_{i}\log\vardelim{\frac{p_{i}}{q_{i}}}$
            &$e^{s}-1$
            &$\varphi_{b}\np{t}$
            \\
            \hline
            Burg entropy
            &$\varphi_{b}\np{t}$
            &$-\log\np{t}+t-1$
            &$\sum_{i = 1}^{N}q_{i}\log\vardelim{\frac{q_{i}}{p_{i}}}$
            &$-\log\np{1-s}\eqsepv s < 1$
            &$\varphi_{kl}\np{t}$
            \\
            \hline
            J-divergence
            &$\varphi_{j}\np{t}$
            &$\np{t-1}\log\np{t}$
            &$\sum_{i=1}^{N}\np{p_{i}-q_{i}}\log\vardelim{\frac{p_{i}}{q_{i}}}$
            &no closed form
            &$\varphi_{j}\np{t}$
            \\
            \hline
            $\chi^{2}$-distance
            &$\varphi_{c}\np{t}$
            &$\frac{1}{t}\np{t-1}^{2}$
            &$\sum_{i=1}^{N}\frac{p_{i}-q_{i}}{p_{i}}$
            &$2-2\sqrt{1-s}, s<1$
            &$\varphi_{mc}\np{t}$
            \\
            \hline
            Modified $\chi^{2}$-distance
            &$\varphi_{mc}\np{t}$
            &$\np{t-1}^{2}$
            &$\sum_{i=1}^{N}\frac{q_{i}-p_{i}}{q_{i}}$
            &$
            \left \{
            \begin{array}{ll}
                -1, &s<-2
                \\
                s+s^{2}/4, &s\geq-2
            \end{array}
            \right .
            $
            &$\varphi_{c}\np{t}$
            \\
            \hline
            Hellinger distance
            &$\varphi_{h}\np{t}$
            &$\bp{\sqrt{t}-1}^{2}$
            &$\sum_{i=1}^{N}\bp{\sqrt{p_{i}}-\sqrt{q_{i}}}$
            &$\frac{s}{1-s},s<1$
            &$\varphi_{h}\np{t}$
            \\
            \hline
            $\chi$-divergence of order $\theta$>1
            &$\varphi_{ca}^{\theta}\np{t}$
            &$\abs{t-1}^{\theta}$
            &$\sum_{i=1}^{N}q_{i}\abs{1-\frac{p_{i}}{q_{i}}}^{\theta}$
            &$s+\np{\theta-1}\vardelim{\frac{\abs{s}}{\theta}}^{\frac{\theta}{\theta-1}}$
            &$t^{1-\theta}\varphi_{ca}^{\theta}\np{t}$
            \\
            \hline
            Variation distance
            &$\varphi_{v}\np{t}$
            &$\abs{t-1}$
            &$\sum_{i=1}^{N}\abs{p_{i}-q_{i}}$
            &$
            \left \{
            \begin{array}{ll}
                -1, &s\leq-1
                \\
                s, &-1 \leq s \leq 1
            \end{array}
            \right .
            $
            &$\varphi_{v}\np{t}$
            \\
            \hline
            Cressie and Read
            &$\varphi_{cr}^{\theta}\np{t}$
            &$\frac{1-\theta+\theta t-t^{\theta}}{\theta\np{1-\theta}}\eqsepv \theta \notin \na{0,1}$
            &$\frac{1}{\theta\np{1-\theta}}\vardelim{1-\sum_{i=1}^{N}p_{i}^{\theta}q_{i}^{1-\theta}}$
            &$
            \left \{
            \begin{array}{l}
                \frac{1}{\theta}\bp{1-s\np{1-\theta}}^{\frac{\theta}{\theta-1}}-\frac{1}{\theta}
                \\
                s < \frac{1}{\theta-1}
            \end{array}
            \right .
            $
            &$\varphi_{cr}^{1-\theta}\np{t}$
            \\
            \hline
            Average Value at Risk of level $\beta$
            &$\varphi_{\textrm{avar}}^{\beta}\np{t}$
            &$\iota_{\nc{0, \frac{1}{1-\beta}}}\eqsepv \beta \in [0,1]$
            &$\sum_{i=1}^{N}\iota_{\nc{0, \frac{1}{1-\beta}}}(\frac{p_{i}}{q_{i}})$
            &$\sigma_{\nc{0, \frac{1}{1-\beta}}} =
            \left \{
            \begin{array}{l}
                \frac{1}{1-\beta} \eqsepv s\geq 0
                \\
                0 \eqsepv s < 0
            \end{array}
            \right .
            $
            &$\iota_{[1-\beta,+\infty[}$
            \\
            \hline
        \end{tabular}
        \caption{
            Common perspective functions and their conjugate used to define $\varphi$-divergences.
        }
        \label{table_phi_divergence}
    \end{table}
\end{landscape}
}

\subsection{Divergence penalty functions}
%

A first case of interest is when the penalty term $\alpha(\probelement)$ in Problem \ref{initial_problem_generic}
measures the ``distance'' between  $\prbt$ and $\probelement$ in the sense of a $\varphi$-divergence.
\begin{prop}\label{prop_div}
    Let $\varphi: \RR \to \nc{0, +\infty}$ be a lsc
    convex function with nonempty domain included in $[0,+\infty[$, which is such that $\varphi\np{1} = 0$.
    Let $\alpha$ be the function given by
    \begin{equation}
    (\forall \probelement\in \RR^{N})\quad
        \alpha\np{\probelement}
        = \begin{cases}
        \lambda_{0}\phidiv{\probelement}{\prbt} & \mbox{if $\probelement \in \cM_{1}$}\eqfinv\\
        +\infty & \mbox{otherwise}\eqfinv
        \end{cases}
    \end{equation}
    with $\lambda_{0} \in ]0,+\infty[$.
  Problem \ref{initial_problem_generic} is equivalent to find
    \begin{equation}\label{eq_problem_div}
    \overline{\theta} = \argmin_{\theta\in \RR^{n}}\, \min_{\mu\in \RR} g(\theta,\mu)         
    \eqfinv
    \end{equation}
      where $g$ is the proper, lsc, convex function given by
    \begin{equation}\label{eq_def_g_div}
    (\forall \theta\in \RR^{n})(\forall \mu \in \RR)\quad 
    g(\theta,\mu) =
        \mu
        +
        \sum_{i=1}^{N}
        \prbt_{i}
        \varphi^{*}\Big(\frac{\ell(\theta, z_{i})}{\lambda_{0}}-\mu\Big)
        \eqfinp
    \end{equation}


\end{prop}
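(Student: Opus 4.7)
The plan is to apply Theorem~\ref{theo_representation} and then evaluate the inner supremum defining $\mesr_\alpha$ by convex duality, using the simplex constraint $\sum_i q_i = 1$ as the only nontrivial equality constraint (the sign constraints $q_i\ge 0$ being already enforced by $\dom\varphi\subset[0,+\infty[$).

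First, by Theorem~\ref{theo_representation}, Problem~\ref{initial_problem_generic} is equivalent to minimizing, with respect to $\theta\in\RR^n$, the quantity
\begin{equation*}
\bmes{\alpha}{\ell(\theta,\mathbf{z})} = \sup_{\probelement\in\cM_1}\Bp{\sum_{i=1}^N \probelement_i \ell(\theta,z_i) - \lambda_0 \phidiv{\probelement}{\prbt}},
\end{equation*}
so I only need to rewrite the right-hand side as $\lambda_0\inf_\mu g(\theta,\mu)$, up to a positive multiplicative constant which does not alter the $\argmin$ in $\theta$.

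Second, I would carry out the duality calculation. Introduce a multiplier $\nu\in\RR$ for the affine constraint $\sum_i q_i = 1$ and write the Lagrangian
\begin{equation*}
L(\probelement,\nu) = \sum_{i=1}^N \probelement_i \ell(\theta,z_i) - \lambda_0 \sum_{i=1}^N p_i\, \varphi\!\left(\frac{\probelement_i}{\prbt_i}\right) + \nu\Bp{1-\sum_{i=1}^N \probelement_i},
\end{equation*}
to be maximized over $\probelement_i\ge 0$. Substituting $u_i = \probelement_i/\prbt_i$ and separating variables, the inner supremum decomposes as $\nu + \sum_{i=1}^N \prbt_i\sup_{u_i\ge 0}\bc{u_i(\ell(\theta,z_i)-\nu)-\lambda_0\varphi(u_i)}$, which by definition of the conjugate (and the fact that $\dom\varphi\subset[0,+\infty[$ lets us drop the constraint $u_i\ge 0$) equals $\nu + \sum_{i=1}^N \prbt_i\, (\lambda_0\varphi)^*(\ell(\theta,z_i)-\nu) = \nu + \lambda_0 \sum_{i=1}^N \prbt_i\, \varphi^*\!\left(\tfrac{\ell(\theta,z_i)-\nu}{\lambda_0}\right)$, using the scaling identity for $(\lambda_0\varphi)^*$ recalled in Section~\ref{section_literature}. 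Finally, changing variables $\nu = \lambda_0\mu$ yields $\lambda_0\, g(\theta,\mu)$.

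Third, I would justify strong duality so that the $\sup$--$\inf$ order can legitimately be exchanged, turning the primal sup into $\lambda_0\inf_\mu g(\theta,\mu)$. The feasible set $\cM_1$ is compact and nonempty, the objective $q\mapsto \sum_i q_i\ell(\theta,z_i) - \lambda_0 D_\varphi(q,p)$ is proper concave and upper semicontinuous (by the standing assumptions on $\varphi$), and the probability $\prbt$ itself is feasible with $D_\varphi(\prbt,\prbt)=0$, providing a Slater-type point relative to the affine constraint. Standard Fenchel--Rockafellar duality then gives the equality without a duality gap and ensures the infimum over $\mu$ is attained. I would close by observing that $g$ is proper, lsc and convex: jointly in $(\theta,\mu)$ it is the sum of the linear term $\mu$ and compositions of the proper lsc convex function $\varphi^*$ with affine maps $(\theta,\mu)\mapsto \ell(\theta,z_i)/\lambda_0 - \mu$, where $\ell(\cdot,z_i)$ is itself proper lsc convex, and $\varphi^*$ is nondecreasing on $\RR$ (since $\dom\varphi\subset[0,+\infty[$), so composition preserves convexity and lower semicontinuity; properness is inherited from~\eqref{eq_loss_prop}.

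The main obstacle I expect is the strong-duality step: I must be careful that the $\varphi$-divergence's effective domain (only finite when $q_i=0$ implies $p_i\varphi(0/p_i)$ is finite, i.e.\ $\lim_{t\to+\infty}\varphi(t)/t$ appears through $\underline{f_\varphi}$) is handled correctly, and that the boundary behavior captured by the lsc envelope $\underline{f_\varphi}$ in Definition~\ref{def_phi_divergence} exactly corresponds to allowing the unrestricted $\sup$ over $u_i\in\RR$ in the conjugate — otherwise a correction term could spoil the clean identification with $\varphi^*$.
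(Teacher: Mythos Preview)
Your proposal is correct and follows the same overall architecture as the paper: invoke Theorem~\ref{theo_representation} and then identify $\mesr_\alpha$ with the $\inf_\mu$ expression by convex duality. The differences are in presentation rather than substance. Where the paper simply cites \citet*[Theorem~4.122]{follmer2016stochastic} for the identity $\mesr_{\alpha/\lambda_0}[\va{X}]=\min_{\mu}\big(\mu+\sum_i p_i\varphi^*(x_i-\mu)\big)$, you derive it by hand via a Lagrangian in the multiplier $\nu$ for the simplex constraint; this is more self-contained and makes explicit why the sign constraints $q_i\ge 0$ are absorbed by $\dom\varphi\subset[0,+\infty[$. For the regularity of $g$, the paper instead expands $\varphi^*$ as a supremum and writes $g$ as a pointwise $\sup$ of lsc convex functions of $(\theta,\mu)$, which sidesteps the slightly delicate ``lsc nondecreasing convex composed with lsc convex is lsc'' step you invoke (your wording ``affine maps'' is inaccurate since $\ell(\cdot,z_i)$ is merely convex, but your subsequent monotonicity argument repairs this). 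Either route works; the paper's sup-of-convex trick is a bit cleaner for lower semicontinuity, while your direct duality computation is more transparent about where the formula comes from.
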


\begin{proof}
We can reexpress \eqref{eq_initial_problem_generic} as
\begin{equation}\label{eq_initial_problem_generic_renorm} 
    \overline{\theta} \in \argmin_{\theta \in \RR^{n}}
    \sup_{\probelement= (\probelement_{i})_{1\le i \le N} \in \cM_{1}}
    \vardelim{
        \sum_{i = 1}^{N}
        q_{i} 
        \frac{\ell(\theta, z_{i})}{\lambda_{0}}
        -\phidiv{\prbt}{\probelement}
    }
    \eqfinp
 \end{equation}
 It follows from \citet*[Theorem 4.122]{follmer2016stochastic} that
     \begin{equation}\label{eq_repres_div}
     (\forall \va{X} \in \XX)\quad
        \nmes{\frac{\alpha}{\lambda_{0}}}{\va{X}}
        =
        \min_{\mu \in \RR}
        \mu
        +
        \sum_{i=1}^{N}
        \prbt_{i}
        \varphi^{*}\bp{x_{i}-\mu}
        \eqfinp
    \end{equation}
    The equivalence between \eqref{eq_initial_problem_generic_renorm} and  \eqref{eq_problem_div} then results from Theorem~\ref{theo_representation}.

    In addition, plugging the expression of $\varphi^*$ in \eqref{eq_def_g_div} yields 
    \begin{equation}\label{eq_def_g_divbis}
    (\forall \theta\in \RR^{n})(\forall \mu \in \RR)\quad 
    g(\theta,\mu) =
        \mu
        +
        \sup_{(t_i)_{i\in \nce{1,N}}\in \RR^{N}} 
        \sum_{i=1}^{N}
        \prbt_{i} t_{i} \Big(\frac{\ell(\theta, z_{i})}{\lambda_{0}}-\mu\Big)- \varphi(t_{i})
        \eqfinp
    \end{equation}
    For every $(t_i)_{i\in \nce{1,N}}\in \RR^{N}$,
    \begin{equation}
    (\theta,\mu) \mapsto  \sum_{i=1}^{N}
        \prbt_{i} t_{i} \Big(\frac{\ell(\theta, z_{i})}{\lambda_{0}}-\mu\Big)- \varphi(t_{i})
    \end{equation}
    is a lsc convex function. Since convexity and lower semicontinuity are
    kept by the supremum operation, $g$ is lsc and convex.
   By using \eqref{eq_loss_prop}, \eqref{eq_def_g_div}, and the fact that $\varphi^*$ is proper, there exist $\theta\in \RR^{n}$
   and $\mu \in \RR$, such that $g(\theta,\mu)<+\infty$.

\end{proof}

\subsection{Constrained formulations}
We now investigate two particular cases
when $\alpha$ is the indicator function of a convex set $\probset$ of probability distributions, so defining an ambiguity set.

\subsubsection{Ball with respect to a divergence}
A first possibility is to introduce an upper bound on the divergence $D_{\varphi}\np{\probelement,\prbt}$ 
between the sought
distribution $\probelement$ and $\prbt$ by considering the constraint set
\begin{equation}
    \probset=\BB_{\epsilon}^{\varphi}
    =
    \Ba{q \in \cM_{1} \mid D_{\varphi}\np{\probelement,\prbt} \leq \epsilon}
    \eqfinv
\end{equation}
where $\epsilon \in ]0,+\infty[$.


The following result generalizes both \citet{ben2013robust}
where the authors deal with linear costs under constraints and \citet{hu2013kullback} where the authors 
focus on the Kullback-Leibler divergence.

\begin{prop}\label{prop_reformulation_RM}
   Let $\varphi: \RR \to \nc{0, +\infty}$ be a lsc
    convex function such that $\dom(\varphi) = ]0,+\infty[$ or $\dom(\varphi) = [0,+\infty[$, and
    $\varphi\np{1} = 0$.
    Let $\epsilon \in ]0,+\infty[$ and let $\alpha= \iota_{\BB_{\epsilon}^{\varphi}}$.
%
%
Problem \ref{initial_problem_generic} is equivalent to find
    \begin{equation}\label{eq_problem_div_ball}
    \overline{\theta} = \argmin_{\theta\in \RR^{n}} \min_{(\lambda,\mu)\in \RR^{2}} g(\theta,\lambda,\mu)         
    \eqfinv
    \end{equation}
    where $g$ is the proper, lsc, convex function given by
    \begin{multline}\label{eq_def_g_div_ball}
    (\forall \theta\in \RR^{n})(\forall (\lambda,\mu) \in \RR^{2})\
    g(\theta,\lambda,\mu) =\\
    \begin{cases}
        \displaystyle \lambda \epsilon
        + \mu
        +\sum_{i = 1}^{N}
        p_{i}
            \lambda
            \varphi^{*}
            \vardelim{
                \frac{\ell \np{\theta, z_{i}} -\mu}{\lambda}      
        } & \mbox{if $\lambda \in [0,+\infty[$} \eqfinv\\
        +\infty & \mbox{otherwise}        \eqfinv
        \end{cases}
    \end{multline}
    with the convention 
    \begin{equation}\label{eq_conv_phistar0}
    0 \varphi^{*}\left(\frac{\cdot}{0}\right) = \iota_{]-\infty,0]}
        \eqfinp    
    \end{equation}

\end{prop}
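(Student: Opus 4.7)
The plan is to invoke Theorem~\ref{theo_representation} to reduce Problem~\ref{initial_problem_generic} to minimizing $\mesr_{\alpha}(\ell(\theta,\mathbf{z}))$, and then to compute $\mesr_{\alpha}$ explicitly via Lagrangian duality on the constrained maximization over the divergence ball. Specifically, for any $\va{X}\in\XX$, the risk measure reads
\begin{equation*}
\nmes{\alpha}{\va{X}} = \sup\Ba{\sum_{i=1}^{N} q_{i} x_{i} \;\Big|\; q\in\RR^{N}_{+},\ \sum_{i=1}^{N}q_{i}=1,\ \sum_{i=1}^{N}p_{i}\varphi\np{q_{i}/p_{i}}\leq \epsilon},
\end{equation*}
where the nonnegativity of $q_{i}$ is enforced implicitly by $\dom(\varphi)\subset[0,+\infty[$.

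Introducing a multiplier $\lambda\geq 0$ for the divergence inequality and a multiplier $\mu\in\RR$ for the affine equality, the Lagrangian separates as a sum over $i$, so the inner supremum decouples. After the change of variable $t_{i}=q_{i}/p_{i}$, the $i$-th inner problem is $p_{i}\sup_{t\geq 0}\bp{t(x_{i}-\mu)-\lambda\varphi(t)}$. For $\lambda>0$ this is exactly $p_{i}\lambda\varphi^{*}\bp{(x_{i}-\mu)/\lambda}$ (using $\dom(\varphi)\subset[0,+\infty[$ so that the supremum over $t\in\RR$ reduces to $t\geq 0$), and for $\lambda=0$ it is $0$ if $x_{i}\leq\mu$ and $+\infty$ otherwise, in agreement with convention~\eqref{eq_conv_phistar0}. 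Collecting all terms yields
\begin{equation*}
\nmes{\alpha}{\va{X}} = \inf_{\lambda\geq 0,\ \mu\in\RR}\Ba{\lambda\epsilon + \mu + \sum_{i=1}^{N}p_{i}\lambda\varphi^{*}\vardelim{\frac{x_{i}-\mu}{\lambda}}},
\end{equation*}
and substituting $x_{i}=\ell(\theta,z_{i})$ gives the claimed joint minimization in $(\theta,\lambda,\mu)$.

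To justify strong duality, I invoke Slater's condition: the reference distribution $p$ itself satisfies $D_{\varphi}(p,p)=\sum_{i}p_{i}\varphi(1)=0<\epsilon$ and charges all points, so it lies strictly in the relative interior of the feasible set. The convex optimization problem is finite-dimensional with a convex objective and convex constraints, hence no duality gap arises. To obtain the stated properties of $g$, I observe that each mapping $(x,\mu,\lambda)\mapsto\lambda\varphi^{*}((x-\mu)/\lambda)$, with the convention~\eqref{eq_conv_phistar0} on $\lambda=0$, is the lower-semicontinuous perspective of the proper lsc convex function $\varphi^{*}$ composed with an affine map, so it is proper, lsc, and jointly convex on $\RR\times\RR\times[0,+\infty[$ (cf. the properties of perspective functions used in the preceding subsection). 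Composition with $\theta\mapsto\ell(\theta,z_{i})$ preserves convexity and lower semicontinuity by monotonicity of $\varphi^{*}$ in its argument (since $\varphi^{*}$ is nondecreasing on the relevant domain whenever $\varphi$ is supported on $[0,+\infty[$), and summing and adding the affine term $\lambda\epsilon+\mu$ preserves all three properties. Properness follows from~\eqref{eq_loss_prop} by exhibiting any $\theta$ in the common domain together with $\lambda$ large and $\mu$ above $\max_{i}\ell(\theta,z_{i})$, for which $g(\theta,\lambda,\mu)$ is finite.

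The main obstacle I expect is handling the boundary $\lambda=0$: one has to verify that the infimum-value formula and the convexity/lower-semicontinuity of $g$ remain consistent with the convention~\eqref{eq_conv_phistar0}, which amounts to checking that the perspective extension of $\varphi^{*}$ used here coincides with the lsc envelope appearing in Definition~\ref{def_phi_divergence}. A secondary subtlety is the precise monotonicity argument needed when composing with $\ell(\cdot,z_{i})$, which must be handled carefully because $\varphi^{*}$ is only guaranteed to be nondecreasing on the image of the nonnegative cone under conjugation; in practice this follows from $\dom(\varphi)\subset[0,+\infty[$.
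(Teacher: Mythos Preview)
Your proposal is correct and follows essentially the same strategy as the paper: reduce to computing $\mesr_{\alpha}$ via Theorem~\ref{theo_representation}, then apply Lagrangian duality with Slater's condition (verified at $q=p$) to the constrained maximization over $\BB_{\epsilon}^{\varphi}$. The two proofs differ only in minor technical packaging. You dualize both the divergence inequality and the affine constraint $\sum_{i}q_{i}=1$ simultaneously, so $\mu$ appears from the outset as the multiplier of the equality and the Lagrangian separates immediately after the substitution $t_{i}=q_{i}/p_{i}$. The paper instead dualizes only the inequality constraint, keeps the simplex constraint in the inner supremum, and then recovers $\mu$ by computing $(\lambda\Phi+\iota_{\cM_{1}})^{*}$ as an inf-convolution with $\sigma_{\cM_{1}}$; the monotonicity of $(\lambda\varphi)^{*}$ is then used to turn the resulting $\sup_{i}y_{i}$ into a free scalar variable $\mu$. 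Your route is more elementary and shorter; the paper's route makes the role of $\sigma_{\cM_{1}}$ and the $\lambda=0$ boundary more explicit. For the properness, lower semicontinuity, and convexity of $g$, you invoke the lsc perspective of $\varphi^{*}$ composed with $\ell(\cdot,z_{i})$ via the monotonicity of $\varphi^{*}$, whereas the paper expands $\varphi^{*}$ by its defining supremum and writes $g$ as a pointwise supremum of lsc convex functions; both arguments are valid and rest on the same ingredient, namely $\dom(\varphi)\subset[0,+\infty[$.
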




\begin{proof}
The risk function associated with $\alpha = \iota_{\BB_{\epsilon}^{\varphi}}$ is 
    \begin{subequations}
        \begin{align}
        (\forall \va{X}\in \XX) \quad
            \nmes{\alpha}{\va{X}}=\sup_{q \in \RR^{N}}
            &\quad
            \sum_{i = 1}^{N}
            q_{i} x_{i} - \iota_{\cM_{1}}(q)
            \eqfinv
            \\
            \textrm{s.t.}
            &
            \quad
            \sum_{i=1}^{N}
            p_{i}\varphi\vardelim{\frac{q_{i}}{p_{i}}}
            \leq \epsilon \label{e:constdivball}
            \eqfinp
        \end{align}
    \end{subequations}
    Since $1$ belongs to the interior of $\dom(\varphi)$ and
    \begin{equation}
     \sum_{i=1}^{N}
            p_{i}\varphi\vardelim{\frac{p_{i}}{p_{i}}} = 0 <
            \epsilon
            \eqfinv
            \end{equation}
	Slater's condition holds for constraint \eqref{e:constdivball}. Since the constraint is feasible and $q\mapsto -\sum_{i = 1}^{N} q_{i} x_{i} + \iota_{\cM_{1}}(q)$ is lsc, convex, and coercive, there exists a solution $\overline{q} \in \cM_{1}$ to the above constrained maximization problem. It then follows from standard Lagrange duality for convex functions that
	there exists $\overline{\lambda} \in [0,+\infty[$ such that $(\overline{q},\overline{\lambda})$ is a saddle point of the Lagrange function
	\begin{align}
	(\forall q \in \mathcal{C})(\forall \lambda \in [0,+\infty[)\;\; \Psi_{\va{X}}(q,\lambda) = -\sum_{i = 1}^{N} q_{i} x_{i} + \lambda\left(\sum_{i=1}^{N}
            p_{i}\varphi\vardelim{\frac{q_{i}}{p_{i}}}-\epsilon\right) 
             \eqfinv
           \end{align}
           where
               \begin{equation}
    \mathcal{C} = \left\{q \in  \cM_{1} \mid (\forall i \in \nce{1,N})\; \frac{q_{i}}{p_{i}}\in \dom \varphi\right\} \eqfinp
    \end{equation}

           We have thus 
             \begin{equation}\label{eq_mesriskFG}
            \nmes{\alpha}{\va{X}}= -\sup_{\lambda \in [0,+\infty[}\inf_{q \in \mathcal{C}} 
             \Psi_{\va{X}}(q,\lambda)       
            = \min_{\lambda \in [0,+\infty[} G(\va{X},\lambda) = G(\va{X},\overline{\lambda})
            \eqfinv
            \end{equation}
            where, for every $\lambda \in [0,+\infty[$,
            \begin{equation}\label{eq_prob_inter_proof}
            G(\va{X},\lambda) =
            \lambda \epsilon
            +
            \sup_{q \in \mathcal{C}}
            \sum_{i = 1}^{N}
            \Bgp{
            q_{i} x_{i}
            -\lambda p_{i}
            \varphi\vardelim{\frac{q_{i}}{p_{i}}}
            }
            \eqfinp
         \end{equation}
         Two cases will be distinguished.
\begin{enumerate}
   \item Case when $\lambda = 0$.\\
   Then \eqref{eq_prob_inter_proof} reduces to
   \begin{equation} \label{e:supportfM1prev}
        G(\va{X},0)
        = \sup_{q \in \mathcal{C}}
            \sum_{i = 1}^{N}
            q_{i} x_{i} \le  \sigma_{\cM_{1}}(\va{X})
         \eqfinv
    \end{equation}
    where
       \begin{equation} \label{e:supportfM1}
         \sigma_{\cM_{1}}(\va{X}) =\sup_{q \in \cM_{1}}
        \sum_{i = 1}^{N}
        q_{i} x_{i} =
        \sup_{i \in \nce{1,N}}
        x_i\eqfinp
       \end{equation}          
       In addition, since $]0,+\infty[ \in \dom(\varphi)$, the upper bound in \eqref{e:supportfM1prev} is attained, yielding
    \begin{equation} \label{e:supportfM1post}
        G(\va{X},0)
 =  \sup_{i \in \nce{1,N}}
        x_i\eqfinp    
        \end{equation}

   \item Case when $\lambda > 0$.\\
   \eqref{eq_prob_inter_proof} can be reexpressed as
   \begin{align}
             G(\va{X},\lambda) &=  \lambda \epsilon
            +
            \sup_{q \in \cM_{1}}
            \sum_{i = 1}^{N}
            \Bgp{
            q_{i} x_{i}
            -\lambda p_{i}
            \varphi\vardelim{\frac{q_{i}}{p_{i}}}
            }\nonumber\\
             &= \lambda \epsilon + (\lambda \Phi+\iota_{\cM_{1}})^*(\va{X})
              \eqfinv
              \end{align}
              where
              \begin{equation}
             (\forall q \in \RR^N) \quad \Phi(q) = \sum_{i=1}^N p_{i} \varphi\vardelim{\frac{q_{i}}{p_{i}}}
             \eqfinp
             \end{equation}
             The conjugate of $\Phi$ reads 
             \begin{align}\label{eq_conj_Phi}
             (\forall \va{Y} \in \XX) \quad (\lambda\Phi)^{*}(\va{Y}) &=  \sup_{q\in \RR^{N}} q_{i} y_{i}-\lambda p_{i} \varphi\vardelim{\frac{q_{i}}{p_{i}}}\nonumber\\
             & = \sum_{i=1}^{N} p_{i} (\lambda \varphi)^{*}(y_{i})
             \eqfinv
             \end{align}
             whereas the conjugate of $\iota_{\cM_{1}}$ is given by $\sigma_{\cM_{1}}$ in \eqref{e:supportfM1}.
              Since $\sigma_{\cM_{1}}$ is finite valued, the conjugate of $\Phi+\iota_{\cM_{1}}$ is given by the following inf-convolution \cite[Theorem 15.3]{bouquin_combettes}
             \begin{equation}\label{eq_inf_conv}
             (\Phi+\iota_{\cM_{1}})^{*}(\va{X}) = \min_{\va{Y}\in \XX} \sigma_{\cM_{1}}(\va{Y})+(\lambda\Phi)^*(\va{X}-\va{Y})
             \eqfinv
             \end{equation}
             which, by using \eqref{eq_conj_Phi}, yields
             \begin{equation}
             G(\va{X},\lambda) = \lambda \epsilon +\min_{\substack{\va{Y}\in \XX\\ \sup_{i \in \nce{1,N}}y_{i} = \mu}}  \mu+\sum_{i=1}^{N} p_{i} (\lambda \varphi)^{*}(x_{i}-y_{i})
              \eqfinp
 \end{equation}
 Since $\dom(\lambda \varphi) \subset [0,+\infty[$, $(\lambda \varphi)^{*}\colon \xi\mapsto \sup_{\upsilon \in [0,+\infty[} \xi \upsilon -\lambda \varphi(\upsilon)$ is an increasing function. This implies that
             \begin{align}
             G(\va{X},\lambda) &= \lambda \epsilon +\min_{\mu \in \RR}  \mu+\sum_{i=1}^{N} p_{i} (\lambda \varphi)^{*}(x_{i}-\mu)\nonumber\\
             &= \lambda \epsilon +\min_{\mu \in \RR}  \mu+\sum_{i=1}^{N} p_{i} \lambda \varphi^{*}\left(\frac{x_{i}-\mu}{\lambda}\right)
              \eqfinp
 \end{align}
 \end{enumerate}
 Note that the right-hand side in the previous formula when applied at $\lambda = 0$ by using \eqref{eq_conv_phistar0} and
\eqref{e:supportfM1} reduces to
\begin{align}
             &\min_{\mu \in \RR}  \mu+\sum_{i=1}^{N} p_{i} \,\iota_{]-\infty,0]}(x_{i}-\mu)\nonumber\\
             =&\; \min_{\substack{\mu \in \RR\\ (\forall i \in \nce{1,N})\, x_{i}\le \mu}}  \mu\nonumber\\
             = &\; G(\va{X},0)
             \eqfinp
\end{align}
Consequently, \eqref{eq_mesriskFG} leads to
        \begin{equation}\label{eq_repres_divset}
             \nmes{\alpha}{\va{X}}= \min_{\lambda \in [0,+\infty[,\mu \in \RR}  
             \lambda \epsilon + \mu+\sum_{i=1}^{N} p_{i} \lambda \varphi^{*}\left(\frac{x_{i}-\mu}{\lambda}\right)
             \eqfinv
             \end{equation}
and \eqref{eq_problem_div_ball} follows from Theorem \ref{theo_representation}.

In addition, by using the expression of the conjuguate, $g$ can be reexpressed as
    \begin{multline}
    (\forall \theta\in \RR^{n})(\forall (\lambda,\mu) \in \RR^{2})\\
    g(\theta,\lambda,\mu)= \sup_{(t_i)_{i\in \nce{1,N}}\in \RR^{N}} 
         \lambda \epsilon
        + \mu
        +\sum_{i = 1}^{N}
        p_{i} (\ell \np{\theta, z_{i}} -\mu)t_{i}-\lambda \varphi(t_{i})) + \iota_{[0,+\infty[}(\lambda).
 \end{multline}
 As a supremum of lsc convex functions, $g$ also is lsc convex.
 The fact that $g$ is proper follows from arguments similar  to those at the end of the proof of Proposition \ref{prop_div}.

\end{proof}

%
%
%

\begin{rem}
The divergence risk measure
in~\eqref{eq_repres_div} is convex, whereas 
the risk measure  in~\eqref{eq_repres_divset} is coherent (see Proposition~\ref{prop:preconvex}), which means that
the risk scales with the data in the latter case.
\end{rem}

\subsubsection{Ball with respect to the Wasserstein metric}\label{section_DRO}
We now investigate Problem~\ref{initial_problem_generic}
when function $\alpha$ is the indicator of a Wasserstein ball centered on $\prbt$.
For this purpose, we first recall the notion of Wasserstein distance.

\begin{mydef}
    Let $\mathcal{M}\np{\Xi^{2}}$ denote the set of probability distributions supported on $\Xi^2$.
    The Wasserstein distance between two distributions $\prbt$ and $\probelement$ supported on $\Xi$ is defined as
    \begin{equation}
        W\np{p, q}
        =
        \inf_{\Pi \in M\np{\Xi^{2}}}
        \Bga{
            \int_{\Xi^{2}}
            \delta\np{\xi, \xi'}
            \Pi\np{d\xi, d\xi'} \mid
            \Pi\np{d\xi, \Xi}
            =
            q\np{d\xi},
            \Pi\np{\Xi, d\xi'}
            =
            p\np{d\xi'}
        }
        \eqfinv
    \end{equation}
    where 
    $\delta$ is a metric on $\Xi$.
\end{mydef}

We now introduce the notion of Wasserstein ball.
The considered constrained set is denoted by 
\begin{equation}
    \probset=\BB_{\epsilon}^{\WW}
    =
    \Ba{q \in \cM_{1} \mid W\np{\prbt,\probelement} \leq \epsilon}
\end{equation}
with $\epsilon \in ]0,+\infty[$.

In the following theorem, $\delta$ is the usual Euclidean distance.
The following convex reformulation of Problem \ref{initial_problem_generic} can
be derived from \cite[Theorem 4.2]{esfahani2015data}.

\begin{prop}\label{prop_reformulation_kuhn}
    Let $\epsilon \in ]0,+\infty[$ and let $\alpha = \iota_{\BB_{\epsilon}^{\WW}}$.
    Then,  Problem \ref{initial_problem_generic} is equivalent to find
    \begin{equation}\label{eq_reformulation_wasser}
    \overline{\theta} = \argmin_{\theta\in \RR^{n}} \min_{\lambda\in \RR,s\in \RR^N} g(\theta,\lambda,s)         
    \eqfinv
    \end{equation}
      where $g$ is the proper, lsc convex function given by
    \begin{equation}
    (\forall \theta\in \RR^{n})(\forall \lambda \in \RR)(\forall s = (s_{j})_{1\le j \le N} \in \RR^N)\quad 
    g(\theta,\lambda,s) =
        \lambda \epsilon
            +
            \sum_{j = 1}^{N}
            p_{j}s_{j} +\iota_{\mathcal{W}}(\theta,\lambda,s)    
            \eqfinv
    \end{equation}
    where $\mathcal{W}$ is the closed convex set defined as
    \begin{equation}
    \mathcal{W} = \{(\theta,\lambda,s)\in \RR^n\times [0,+\infty[ \times \RR^{N} \mid (\forall (i,j) \in \nce{1,N}^2)\;
    \ell \np{\theta, z_{i}} -\lambda \nnorm{z_{i} - z_{j}}
            \leq s_{j}            \}
            \eqfinp
    \end{equation}

\end{prop}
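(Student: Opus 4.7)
The plan is to mirror the strategy used in the proof of Proposition~\ref{prop_reformulation_RM}: first invoke Theorem~\ref{theo_representation} to reduce Problem~\ref{initial_problem_generic} to minimizing $\mesr_\alpha\bp{\ell(\cdot,\mathbf{z})}$, and then to compute an explicit dual formula for $\mesr_\alpha(\va{X})$ when $\alpha = \iota_{\BB_{\epsilon}^{\WW}}$ via linear programming duality applied to the Kantorovich transportation problem.

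Concretely, I would first exploit the finite support structure: since $\Omega$ is finite, the Wasserstein distance reduces to a linear program, so the constraint $W(\prbt,\probelement) \le \epsilon$ is equivalent to the existence of a joint law $\Pi = (\Pi_{ij})$ with $\Pi \ge 0$, $\sum_i \Pi_{ij} = p_j$, $\sum_j \Pi_{ij} = q_i$, and $\sum_{ij} \Pi_{ij}\nnorm{z_i-z_j} \le \epsilon$. Substituting $q_i = \sum_j \Pi_{ij}$ (which automatically forces $q \in \cM_1$ since $\sum_i q_i = \sum_j p_j = 1$), one gets
\begin{equation*}
\nmes{\alpha}{\va{X}} = \sup_{\Pi \ge 0} \Ba{ \sum_{i,j} \Pi_{ij} x_i \;\Big|\; \sum_i \Pi_{ij} = p_j,\ \sum_{i,j} \Pi_{ij}\nnorm{z_i-z_j} \le \epsilon }.
\end{equation*}
This is a finite-dimensional LP in $\Pi$ with a feasible point (take $\Pi_{ij} = p_j \mathbf{1}_{i=j}$), so LP strong duality applies. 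Dualizing the Wasserstein inequality with $\lambda \ge 0$ and the marginal constraint with $s = (s_j)_{1\le j\le N} \in \RR^N$, the Lagrangian is
\begin{equation*}
\lambda\epsilon + \sum_j p_j s_j + \sum_{i,j} \Pi_{ij}\bp{x_i - \lambda\nnorm{z_i-z_j} - s_j},
\end{equation*}
whose supremum over $\Pi \ge 0$ is finite if and only if $s_j \ge x_i - \lambda\nnorm{z_i-z_j}$ for all $i,j$. Hence
\begin{equation*}
\nmes{\alpha}{\va{X}} = \min_{(\lambda,s)\in[0,+\infty[\times\RR^N} \Ba{ \lambda\epsilon + \sum_j p_j s_j \mid (\forall i,j)\ x_i - \lambda\nnorm{z_i-z_j}\le s_j }.
\end{equation*}
Plugging $x_i = \ell(\theta,z_i)$ into this formula and combining with \eqref{eq_convex_infpre} yields \eqref{eq_reformulation_wasser}.

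It remains to verify that $g$ is proper, lsc, and convex. The set $\mathcal{W}$ is defined by finitely many inequalities of the form $\ell(\theta,z_i) - \lambda\nnorm{z_i-z_j} - s_j \le 0$ involving functions jointly convex in $(\theta,\lambda,s)$ (since each $\ell(\cdot,z_i)$ is lsc convex and the other terms are affine), so $\mathcal{W}$ is closed and convex, making $\iota_\mathcal{W}$ lsc and convex. Since the remaining term of $g$ is affine, $g$ is lsc and convex. For properness, fix any $\theta_0$ in the nonempty intersection $\bigcap_{i}\dom\bp{\ell(\cdot,z_i)}$ from \eqref{eq_loss_prop}, set $\lambda = 0$, and $s_j = \max_i \ell(\theta_0,z_i)$ for every $j$; then $(\theta_0,0,s)\in\mathcal{W}$ with $g(\theta_0,0,s)<+\infty$.

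The only delicate point I anticipate is the passage through LP duality: one must check the regularity conditions (feasibility of both primal and dual, or a Slater-type condition for the Wasserstein inequality) carefully enough to ensure that the optimal value is attained and strong duality holds without a duality gap. Since the Wasserstein ball has nonempty interior (it contains $\prbt$ strictly in the interior as soon as $\epsilon > 0$) and the primal LP is bounded above (any $q\in\cM_1$ gives $\sum_i q_i x_i \le \max_i x_i$), this is standard, and one can either cite the finite-support specialization of~\cite{esfahani2015data}[Theorem 4.2] directly or write a short self-contained LP-duality argument.
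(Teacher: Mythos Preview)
Your proposal is correct and essentially matches the paper's approach: the paper simply invokes \cite[Theorem~4.2]{esfahani2015data} without giving details, while you unpack that result in the finite-support setting via a self-contained LP duality argument, which is exactly how Esfahani--Kuhn prove their theorem. Your additional verification that $g$ is proper, lsc, and convex (which the paper's statement asserts but its one-line proof does not address) is a welcome completion.
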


\section{Numerical solution}\label{section_numerical_experiments}
We will now propose an algorithm allowing us to solve numerically the three convex optimization problems in Propositions
\ref{prop_div}, \ref{prop_reformulation_RM}, and \ref{prop_reformulation_kuhn}. This algorithm 
applies to more general choices of function $\alpha$ in Problem~\ref{initial_problem_generic}
where the constraint $\mathcal{S}$ in \eqref{e:defS} splits
as an intersection of a finite number of convex constraints.

\subsection{A unifying formulation}
We first show that the convex optimization problems discussed in Section \ref{section_literature} can be reexpressed in a unifying manner.
\begin{prop}
The optimization problems in Propositions 
\ref{prop_div}, \ref{prop_reformulation_RM}, and \ref{prop_reformulation_kuhn}
amount to finding
\begin{subequations}\label{eq_common_formulation}
    \begin{align}
       (\overline{\theta},\overline{\lambda},\overline{\mu},\overline{s}) \in \argmin_{\theta\in \RR^{n},\lambda \in [0,+\infty[, \mu\in \RR, s \in \RR^{N}}
        &\quad
        \lambda \epsilon
        +\mu
        + \sum_{i = 1}^{N}
        p_{i}
        s_{i}
        \\
        \textrm{s.t.}
        &\quad (\forall k \in \nce{1,K})\quad
        f_{k}\bp{\theta, \lambda, \mu, \mathbf{z}}
        \leq 0
        \eqfinv
    \end{align}
\end{subequations}
where $K\in \NN\setminus\{0\}$ and the functions $\big(f_{k}\bp{\cdot, \mathbf{z}}\big)_{k\in \nce{1,K}}$ are proper, lsc, and convex.
More precisely,
\begin{enumerate}
\item for divergence penalty functions, $K=N$ and, for every $k \in \nce{1,K}$,
        \begin{multline}\label{e:fkdivpen}
            (\forall \theta\in \RR^{n})(\forall \lambda\in [0,+\infty[)(\forall \mu\in \RR)(\forall s \in \RR^{N})\\
            \quad f_{k}\bp{\theta, \lambda, \mu, s,\mathbf{z}}
            =
            \varphi^{*}\left(\frac{\ell \np{\theta, z_{k}}}{\lambda}-\mu\right)+\iota_{\{\lambda_{0}\}}(\lambda)-s_{k}
            \eqfinv
        \end{multline} 
\item for divergence ball constraints, $K=N$ and, for every $k \in \nce{1,K}$,       
        \begin{multline}
            (\forall \theta\in \RR^{n})(\forall \lambda\in [0,+\infty[)(\forall \mu\in \RR)(\forall s \in \RR^{N})\\
            \quad f_{k}\bp{\theta, \lambda, \mu, s,\mathbf{z}}
            =
            \lambda
            \varphi^{*}
            \vardelim{
                \frac{\ell \np{\theta, z_{k}} -\mu}{\lambda}
                }-s_{k}
                \eqfinv
        \end{multline} 
\item for the Wassertein ball constraint, $K=N^{2}$ and, for every $k \in \nce{1,K}$
and $(i_{k},j_{k})\in   \nce{1,N}^{2}$ such that $k = N(i_{k}-1) + j_{k}$,
        \begin{multline}
            (\forall \theta\in \RR^{n})(\forall \lambda\in [0,+\infty[)(\forall \mu\in \RR)(\forall s \in \RR^{N})\\
            \quad f_{k}\bp{\theta, \lambda, \mu, s, \mathbf{z}}
            =
            \ell \np{\theta, z_{i_{k}}} -\lambda \nnorm{z_{i_{k}} - z_{j_{k}}}-s_{j_{k}}
            \eqfinp
        \end{multline} 
\end{enumerate}
\end{prop}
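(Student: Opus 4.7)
The plan is to verify the proposition by a direct epigraph-style reformulation, treating each case in turn. In each of the three problems (Propositions \ref{prop_div}, \ref{prop_reformulation_RM}, \ref{prop_reformulation_kuhn}), the objective can be written, up to trivialities, in the form $\lambda \epsilon + \mu + \sum_{i=1}^{N} p_i\, h_i(\theta, \lambda, \mu)$ for a suitable lsc convex function $h_i$. I introduce slack variables $s_i \in \RR$ together with the constraint $h_i(\theta, \lambda, \mu) \le s_i$. Since $\prbt$ charges every point, $p_i > 0$ for every $i \in \nce{1,N}$, so the coefficient of each $s_i$ in the new linear objective is strictly positive; consequently any minimizer saturates these constraints and the enlarged problem is equivalent to the original one. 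Rewriting each saturation condition as $f_k(\theta, \lambda, \mu, s, \mathbf{z}) := h_i(\theta, \lambda, \mu) - s_i \le 0$ then produces the common form~\eqref{eq_common_formulation}.

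For the divergence-penalty case, the function $g$ of \eqref{eq_def_g_div} gives $h_i(\theta, \lambda, \mu) = \varphi^{*}\bp{\ell(\theta,z_i)/\lambda_0 - \mu}$, and $\lambda$ does not actually appear. To embed this into the unified template, I promote $\lambda$ to a formal variable constrained to $\{\lambda_0\}$ via the indicator appended to $f_k$ in~\eqref{e:fkdivpen}; since $\{\lambda_0\}$ is closed convex and $\varphi^{*}$ is lsc convex, $f_k$ inherits properness, lower semicontinuity and convexity, and the extra $\lambda \epsilon$ term becomes a harmless constant. For the divergence-ball case, $h_i(\theta,\lambda,\mu) = \lambda\, \varphi^{*}\bp{(\ell(\theta,z_i)-\mu)/\lambda}$ (extended by~\eqref{eq_conv_phistar0} at $\lambda=0$). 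The key technical point is that $(\lambda, u) \mapsto \lambda\, \varphi^{*}(u/\lambda)$ is exactly the perspective function of the lsc convex $\varphi^{*}$, hence jointly convex and lsc on $[0,+\infty[ \times \RR$; composing with the lsc convex map $(\theta,\mu) \mapsto \ell(\theta,z_i) - \mu$ (itself convex by~\eqref{eq_loss_prop} and lsc by assumption on $\ell$) preserves these properties, so $f_k$ is proper, lsc, and convex.

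For the Wasserstein case, the formulation obtained in Proposition~\ref{prop_reformulation_kuhn} is already in the required shape: the objective is $\lambda \epsilon + \sum_j p_j s_j$, and the constraints $\ell(\theta,z_i) - \lambda \nnorm{z_i - z_j} \le s_j$ are naturally indexed by pairs $(i,j) \in \nce{1,N}^2$, which I linearize via $k = N(i_k-1) + j_k$ to obtain $K = N^2$ constraints. Each $f_k$ is the sum of the lsc convex function $\ell(\cdot, z_{i_k})$, the jointly concave term $-\lambda \nnorm{z_{i_k} - z_{j_k}}$ which is in fact linear in $\lambda$, and the linear term $-s_{j_k}$, hence is proper, lsc, and convex. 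The only subtlety worth flagging is that $\mu$ does not appear in any $f_k$ in this case whereas the unified objective carries a $+\mu$ term; this is resolved by observing that for the Wasserstein case the minimization is independent of $\mu$, so one may simply fix $\mu$ to $0$ (or regard it as a dummy constraint $\mu = 0$) without altering $\overline{\theta}$. The main obstacle in the argument is really this cosmetic alignment together with the perspective-function check in the divergence-ball case; everything else is bookkeeping based on the epigraphic trick and the lsc convex properties transferred through composition with $\ell$.
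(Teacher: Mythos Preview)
The paper does not actually supply a proof for this proposition; it is stated as a direct reformulation of Propositions~\ref{prop_div}, \ref{prop_reformulation_RM}, and \ref{prop_reformulation_kuhn} and left without further justification. Your epigraph/slack-variable argument is the natural way to make this explicit, and it is correct. Two small refinements are worth recording. In case~(ii), the composition step ``perspective of $\varphi^{*}$ composed with the convex map $(\theta,\mu)\mapsto \ell(\theta,z_i)-\mu$'' requires not only joint convexity of the perspective but also that it be nondecreasing in its second argument; this holds because $\dom\varphi\subset[0,+\infty[$ forces $\varphi^{*}$ to be nondecreasing, a point you use implicitly but do not state. In case~(iii), your remark about $\mu$ is not just cosmetic: since $\mu$ appears with coefficient $+1$ in the unified objective and in no constraint, leaving it free would drive the infimum to $-\infty$; fixing $\mu=0$ (or adding the constraint $\mu=0$) is therefore necessary, not optional, for the Wasserstein instance of~\eqref{eq_common_formulation} to be well posed and to coincide with Proposition~\ref{prop_reformulation_kuhn}.
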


\subsection{Description of the algorithm}
In this section, we propose  an accelerated projected gradient algorithm for solving
Problem~\eqref{eq_common_formulation}.
One step of this proximal algorithm \cite{articletutoriel-Combettes-Pesquet}  reads as a projection onto a set
defined as an intersection of non trivial closed convex sets.
To solve this projection problem, we use the subgradient projection algorithm
in~\citet*{combettes2003block}, which is related to ideas introduced
in~\citet*[Theorem 3-2]{haugazeau1968inequations}. This algorithm allows 
the constraints to be activated individually in a flexible parallel manner.
We will first recall the basic structure of our algorithm
before describing in more details the subgradient projection step.

\paragraph{Proximal algorithm}

Let $\cH = \RR^n\times \RR \times \RR \times \RR^N$
and let $\nnorm{\cdot}$ (resp. $\proscal{\cdot}{\cdot}$) denote the standard norm (resp. the inner product) equiping this product space.
By introducing the generic variable $u=(\theta,\lambda,\mu,s)\in \cH$,
\eqref{eq_common_formulation} can be reexpressed more concisely as
\begin{equation}\label{eq_prob_min_on_C}
    \min_{u \in \cH} \proscal{c}{u}+ \iota_{\mathcal{C}}\np{u}
    \eqfinv
\end{equation}
where $c = (0,\epsilon,1,p)\in \cH$ and $\mathcal{C} = \cap_{k=0}^K \mathcal{C}_{k}$
with
\begin{align}
&\mathcal{C}_{0} = \{ (\theta,\lambda,\mu,s)\in \RR^N\times [0,+\infty[ \times \RR \times \RR^N\}
    \eqfinv\\
&(\forall k \in \nce{1,K})\quad  \mathcal{C}_{k} = \operatorname{lev}_{\le 0} f_{k}\bp{\cdot, \mathbf{z}}
\eqfinp
\end{align}
To solve the above problem, we propose to employ a FISTA-like algorithm \cite{Beck-Teboule}.
Let $n\in \NN\setminus\{0\}$. The $n$-th iteration of this algorithm reads
\begin{align}
v^{(n)}&= u^{(n)}+\frac{\tau^{(n)}-1}{\tau^{(n+1)}} (u^{(n)}-u^{(n-1)}) \eqfinv\\
u^{(n+1)}&= \operatorname{P}_{\mathcal{C}}(v^{(n)}-\gamma c) \eqfinv \label{e:iterFISTA}
\end{align}
where $\gamma \in ]0,+\infty[$ and
$\operatorname{P}_{\mathcal{C}}\colon \cH \to \mathcal{C}$ is the projection onto the closed 
convex set $\mathcal{C}$. 
It follows from \cite[Theorem 3]{Chambolle-Dossal} that, if a solution to the minimization problem exists, and
\begin{equation}
\tau^{(n)} = \frac{n+a-1}{a}, \quad a > 2 \eqfinv
\end{equation}
then the convergence of $(u^{(n)})_{n\in \NN}$ to a solution to the problem is guaranteed.

The main difficulty in the implementation of the algorithm lies in the computation of the projection onto 
$\mathcal{C}$ that will be discussed next.

\paragraph{Computation of the projection}
Algorithm \ref{algo_projection} presents our  projection method inspired from~\citet*{combettes2003block}. At iteration $\ell \in \NN$, $Q(p^{(0)},p^{(\ell)},r^{(\ell)})$ designates 
the projection of $p^{(0)}$ onto the intersection of the 3 half-spaces
$\mathcal{C}_{0}$, $H_\ell$, and $D_\ell$, where
\begin{align}
H_{\ell} &= \{u \in \cH \mid  \proscal{u-r^{(\ell)}}{p^{(\ell)}-r^{(\ell)}} \le 0 \} \eqfinv\\
D_{\ell} &= \{u \in \cH \mid  \proscal{u-p^{(\ell)}}{p^{(0)}-p^{(\ell)}} \le 0 \} \eqfinp
\end{align}
Since the projection onto $H_{\ell} \cap D_{\ell}$ has an explicit form \citet*{combettes2003block},
a dual forward-backward algorithm \cite{CombettesDFB} allows us to compute in a fast manner the projection onto
$\mathcal{C}_{0} \cap H_{\ell} \cap D_{\ell}$. The algorithm has been intialized by setting $p^{(0)} = \operatorname{P}_{\mathcal{C}_{0}}(v^{(n)}-\gamma c)$, taking into account the fact that $\operatorname{P}_{\mathcal{C}} = \operatorname{P}_{\mathcal{C}} \circ \operatorname{P}_{\mathcal{C}_{0}}$. At each iteration $\ell$, $\mathbb{K}_{\ell}$ designates the set of indices of the constraints which are activated. When dealing with large-scale problems, it may be useful not to require all the constraints to be activated at each iteration.
The convergence of the algorithm is guaranteed by the study in \cite{outerCombettes2000}, provided that, for every $k\in \nce{1,K}$, $\mathcal{C}_{0} \subset \dom(\partial f_{k})$ and there exists an integer $M_{k}$ such that 
\begin{equation}
(\forall \ell \in \NN)\quad  
k \in \bigcup_{s=\ell}^{\ell+M_{k}} \mathbb{K}_{s}.
\end{equation}
The first assumption on the domains of the subdifferentials of the functions $(f_{k})_{k\in \nce{1,K}}$ is however not satisfied in
\eqref{e:fkdivpen}. In this case, the direct simpler form of the algorithm in \citet*{combettes2003block} can be applied
 since the parameter $\lambda$ is fixed.

\IncMargin{1em}
\begin{algorithm}[H]\label{algo_projection}
    \SetAlgoLined
    \KwData{$u^{(n-1)} \in \cH$, $u^{(n)} \in \cH$, $\delta \in ]0,1[$}
    \KwResult{Output of the accelerated projected gradient iteration \eqref{e:iterFISTA}}
    $\displaystyle v^{(n)}= u^{(n)}+\frac{\tau^{(n)}-1}{\tau^{(n+1)}} (u^{(n)}-u^{(n-1)})$\;
    $p^{(0)} = \operatorname{P}_{\mathcal{C}_{0}}(v^{(n)}-\gamma c)$\;
    Initialize $\ell = 0$,
    \While{$p^{(\ell)}\notin \mathcal{C}$}{
        Take a nonempty finite index set $\mathbb{K}_{\ell} \subset \nce{1,K}$\;
            For every $k \in \mathbb{K}_{\ell}$,
            $p_{k}^{(\ell)}
            =
            \left \{
            \begin{array}{lr}
                \displaystyle p^{(\ell)} - \frac{f_{k}\np{p^{(\ell)} }t_{k}^{(\ell)}}{\nnorm{t_{k}^{(\ell)}}}
                \eqsepv
                t_{k}^{(\ell)} \in \partial f_{k}\np{p^{(\ell)} }
                \eqfinv
                & \textrm{if } f_{k}\np{p^{(\ell)} } > 0
                \\
                p^{(\ell)}  \eqfinv
                & \textrm{if } f_{k}\np{p^{(\ell)} } \leq 0
            \end{array}
            \right .
            $
            \\
            Choose $\{\omega_{k,\ell}\mid k \in \mathbb{K}_{\ell}\} \subset \nc{\delta,1}$ such that $\displaystyle\sum_{k \in \mathbb{K}_{\ell}}\omega_{k,\ell} = 1$\\
            $\displaystyle q^{(\ell)} = \sum_{k \in \mathbb{K}_{\ell}}\omega_{k,\ell} p_{k}^{(\ell)} -  p^{(\ell)}$ \\
            $L_{\ell} =
            \begin{cases}
                \displaystyle\frac{\sum_{k \in \mathbb{K}_{\ell}} \omega_{k,\ell}\nnorm{p_{k}^{(\ell)}-p^{(\ell)}}^{2}}{\nnorm{q^{(\ell)}}^{2}}
                \eqfinv
                & \textrm{if }
                p^{(\ell)} \notin \bigcap_{k \in \mathbb{K}_{\ell}} \mathcal{C}_{k}
                \\
                1
                \eqfinv
                & \textrm{otherwise}
            \end{cases}
            $\\
         \emph{$r^{(\ell)} = p^{(\ell)} - L_{\ell}q^{(\ell)}$}\;
         $p^{(\ell+1)} = Q(p^{(0)},p^{(\ell)},r^{(\ell)})$\;
    }
    \KwRet{$ u^{(n+1)}=p^{\rm end}$}
 \caption{Projection algorithm.}
\end{algorithm}
\DecMargin{1em}

\section{Application to robust binary classification}\label{se:sim}

\subsection{Context}
In this section, we illustrate the performance of our approach on different scenarios in the context of binary classification. To this aim, we consider the \texttt{ionosphere} and \texttt{colon-cancer} datasets~\footnote{\url{https://www.csie.ntu.edu.tw/~cjlin/libsvmtools/datasets/binary.html}.}. The respective numbers of observations $N$ and of features $d$ are summarized in Table~\ref{tab:dataset}. Unless specified, we will consider the original datasets without pre-processing, using a training set with 60\% of the original database and a testing set gathering the remaining entries. The splitting between training and testing samples is performed using function \texttt{train\_test\_split} of Scikit-learn~\footnote{\url{https://scikit-learn.org}}. We propose to compare the classical formulation in Equation \eqref{e:classform} with the formulation in Problem \ref{prop_reformulation_RM} (resp. Problem \ref{prop_reformulation_kuhn}) that uses ambiguity sets defined through the Kullback-Leibler divergence (resp. Wasserstein distance). We make use of the logistic regression loss in \eqref{e:logreg} (\cite{LogisticChierchia} for recent developments). The constrained minimization problems are solved running the proposed Algorithm~\ref{algo_projection} over a sufficient number of iterations so as to reach the stability criterion $\|p^{(\ell+1)}-p^{(\ell)}\| \leq 10^{-5}$. All the tests are performed by using Julia programming language, on a computer with processors Intel® Core™ i7-3610QM CPU @ 2.30GHz × 8 and 16Gb of RAM. 

\begin{table}[h]
    \centering
    \begin{tabular}{c|c|c}
        Name of dataset
        & \texttt{ionosphere}
        & \texttt{colon-cancer}
        \\
        \hline
        \hline
        Number of observations ($N$)
        &351
        &64
        \\
        \hline
        Number of features ($d$)
        &34
        &2000
    \end{tabular}
\caption{Parameters of the datasets.}
\label{tab:dataset}
\end{table}

\subsection{Results in standard conditions}
\label{section_convergence}

\subsubsection{Ionosphere dataset}

We display the evolution of the difference between the current cost function and its final value (computed after a very high number of iterations), with respect to the iteration number (see Figure~\ref{fig_convergence_iter}) and CPU time (see Figure \ref{fig_convergence_time}). In the case of the Kullback-Leibler divergence, we choose $\mathbb{K}_\ell = [1,K]$ while for Wasserstein distance, we set the cardinality of $\mathbb{K}_\ell$ equal to $1500$ ($K = 44100$ in this case). In this example, we observe that the convergence speed is slightly increased for large values of $\varepsilon$. Regarding the comparison between the two ambiguity sets, it can be observed on Figure \ref{fig_convergence_time} that the method is faster in the case of the Kullback-Leibler divergence since the number of constraints grows linearly as a function of the number of observations, whereas the growth is quadratic in the case of the Wasserstein distance. 

\begin{figure*}[h!]
    \centering
    \begin{subfigure}[t]{0.5\textwidth}
        \centering
        \includegraphics[scale = 0.5]{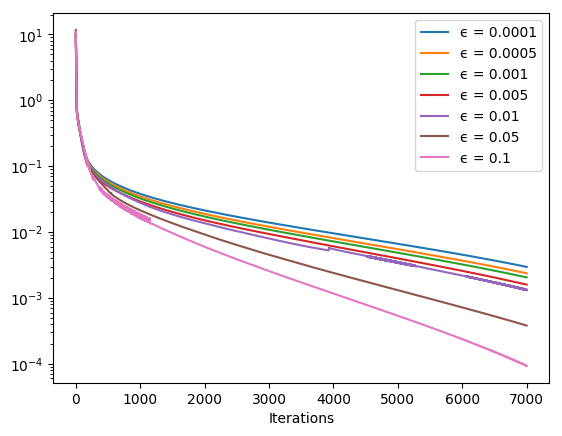}
        \caption{Kullback-Leibler divergence}
    \end{subfigure}%
    ~ 
    \begin{subfigure}[t]{0.5\textwidth}
        \centering
        \includegraphics[scale = 0.5]{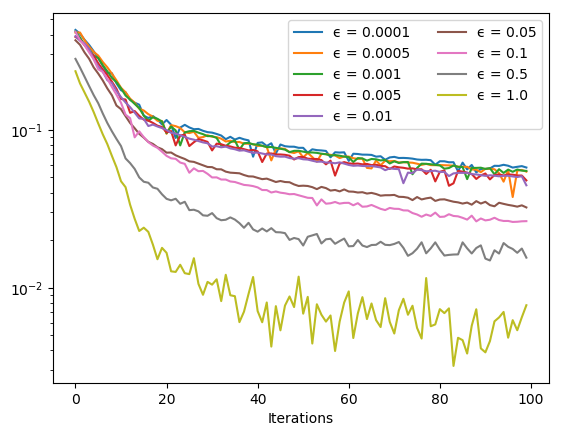}
        \caption{Wasserstein distance}
    \end{subfigure}
    \caption{\texttt{ionosphere} dataset: Log of the difference between current loss and final loss, with respect to the iteration number for various values of $\epsilon$.
    }
    \label{fig_convergence_iter}
\end{figure*}

\begin{figure*}[t!]
    \centering
    \begin{subfigure}[t]{0.5\textwidth}
        \centering
        \includegraphics[scale = 0.5]{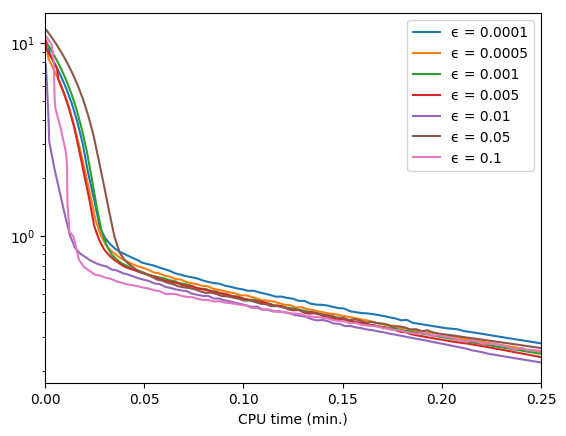}
        \caption{Kullback-Leibler divergence}
    \end{subfigure}%
    ~ 
    \begin{subfigure}[t]{0.5\textwidth}
        \centering
        \includegraphics[scale = 0.5]{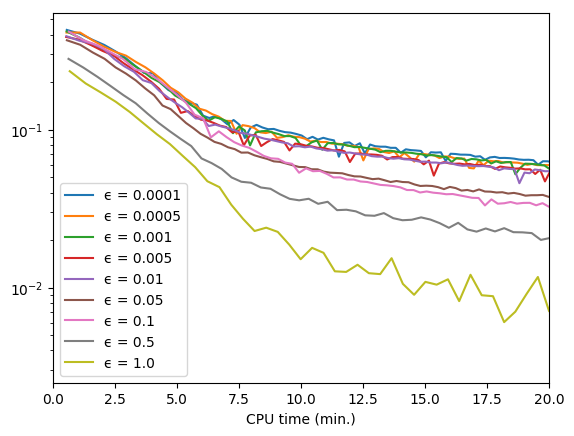}
        \caption{Wasserstein distance}
    \end{subfigure}
    \caption{\texttt{ionosphere} dataset: Log of the difference between current loss and final loss, with respect to the CPU time for vaious values of $\epsilon$ over the first 100 iterations.}
    \label{fig_convergence_time}
\end{figure*}


%
%

Figure~\ref{fig_AUC_epsilon} shows the value of the area under the ROC curve (AUC metric)~\cite{bradley1997use}
as a function of $\epsilon$, computed using the testing set. There is a clear compromise in the choice of the value of $\epsilon$ for maximizing the AUC, and the best performance are obtained for an intermediate non-zero value of this parameter. This clearly illustrates the benefit of the proposed formulation. Note that such results are consistent with the conclusions in~\citet*{shafieezadeh2015distributionally} and~\citet*{gotoh2018robust}. On this example, the Wassertein ambiguity set provides better results than the Kullback-Leibler divergence 
but it should be reminded that it comes at the expense of a higher computational cost.

\begin{figure*}[h!]
    \centering
    \begin{subfigure}[t]{0.5\textwidth}
        \centering
        \includegraphics[scale = 0.5]{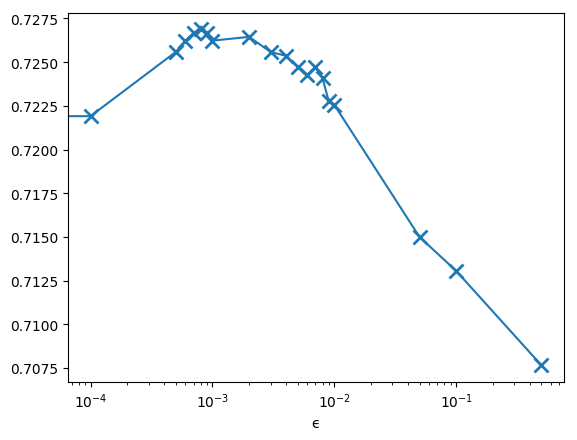}
        \caption{Kullback-Leibler divergence}
    \end{subfigure}%
    ~ 
    \begin{subfigure}[t]{0.5\textwidth}
        \centering
        \includegraphics[scale = 0.5]{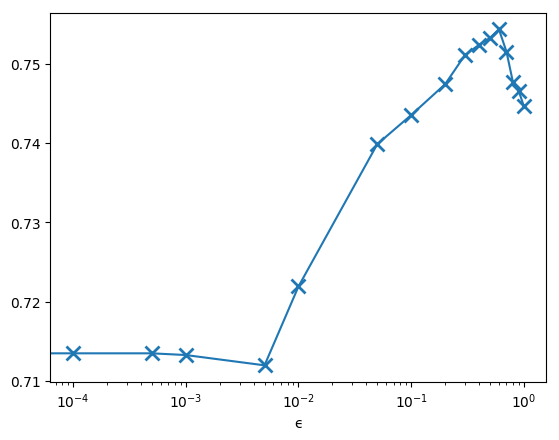}
        \caption{Wasserstein distance}
    \end{subfigure}
    \caption{\texttt{ionosphere} dataset: AUC metric as a function of $\epsilon$.}
    \label{fig_AUC_epsilon}
\end{figure*}

%

\subsubsection{Colon-cancer dataset}
\label{section_colon_cancer}

We now present in Table~\ref{table_scor_colon_cancer} the evolution of the AUC for tests performed on the \texttt{colon-cancer} dataset. 
This dataset only contains 64 observations. For such small dataset, the formulation in Problem \ref{prop_reformulation_kuhn} becomes very cheap in terms of computational cost. As can be noticed in Table~\ref{table_scor_colon_cancer}, taking a nonzero value for $\epsilon$
leads to an increase of about 7\% in terms AUC, which is significant in such challenging context. These results allow to assess the robustness property of the proposed formulation. 

\begin{table}[!ht]
    \centering
    \begin{tabular}{c|c|c}
        Value of $\epsilon$
        & AUC with KL
        & AUC with Wasserstein
        \\
        \hline
        \hline
        $\epsilon = 0$ (LR)
        &0.832
        &0.832
        \\
        $\epsilon = 0.001$
        &0.757
        &0.787
        \\
        $\epsilon = 0.002$
        &0.750
        &0.770
        \\
        $\epsilon = 0.003$
        &0.779
        &0.706
        \\
        $\epsilon = 0.004$
        &0.698
        &0.691
        \\
        $\epsilon = 0.005$
        &0.868
        &0.831
        \\
        $\epsilon = 0.006$
        &\textbf{0.890}
        &0.860
        \\
        $\epsilon = 0.007$
        &0.728
        &0.838
        \\
        $\epsilon = 0.008$
        &0.809
        &0.768
        \\
        $\epsilon = 0.009$
        &0.875
        &\textbf{0.890}
        \\
        $\epsilon = 0.01$
        &0.801
        &0.853
        \\
        $\epsilon = 0.05$
        &0.786
        &0.794
        \\
        $\epsilon = 0.1$
        &0.801
        &0.816
    \end{tabular}
    \caption{\texttt{colon-cancer} dataset: Values of the AUC for different values of $\epsilon$.}
    \label{table_scor_colon_cancer}
\end{table}

\subsection{Results on an altered database}
\label{section_distribution}

Let us come back to the processing of \texttt{ionosphere} dataset. In order to better illustrate the interest of the proposed formulation, we propose to modify the training set so that the proportion of labels $(-1)$ and $(+1)$ is altered and unbalanced. Such situation could
typically arise during a transient regime, such as the beginning of an epidemic, or in the case of an incomplete dataset.
After dividing the dataset between a training set and a testing set, using the same 60\% ratio as in our previous tests, we will drop randomly a certain number of observations associated with the label $(-1)$ so that the proportion of this label becomes ten times lower than its original proportions. Figure~\ref{fig_ROC_curve} displays ROC curves and Table~\ref{table_mod_io} evaluates AUC metric, for various values of $\epsilon$. Our formulation clearly outperforms the classical logistic regression classifier (retrieved when $\epsilon = 0)$.
Noticeably, the later presents the same area under the curve as a random classifier, and thus exhibits a similar behavior to such a classifier.

\begin{figure*}[h!]
    \centering
    \begin{subfigure}[t]{0.5\textwidth}
        \centering
        \includegraphics[scale = 0.5]{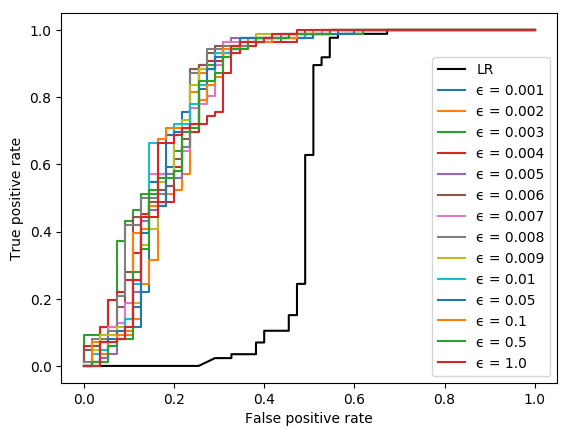}
        \caption{Kullback-Leibler divergence}
    \end{subfigure}%
    ~ 
    \begin{subfigure}[t]{0.5\textwidth}
        \centering
        \includegraphics[scale = 0.5]{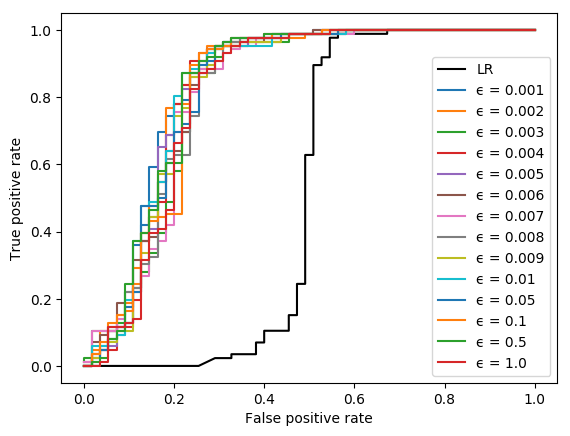}
        \caption{Wasserstein distance}
    \end{subfigure}
    \caption{\texttt{ionosphere} dataset (altered): ROC curve for different values of $\epsilon$.}
    \label{fig_ROC_curve}
\end{figure*}

%

\begin{table}[!ht]
    \centering
    \begin{tabular}{c|c|c}
        Value of $\epsilon$
        & AUC with KL
        & AUC with Wasserstein
        \\
        \hline
        \hline
        $\epsilon = 0$ (LR)
        &0.514
        &0.514
        \\
        $\epsilon = 0.001$
        &0.816
        &\textbf{0.840}
        \\
        $\epsilon = 0.002$
        &0.804
        &0.835
        \\
        $\epsilon = 0.003$
        &\textbf{0.840}
        &0.814
        \\
        $\epsilon = 0.004$
        &0.824
        &0.830
        \\
        $\epsilon = 0.005$
        &0.815
        &0.829
        \\
        $\epsilon = 0.006$
        &0.834
        &0.829
        \\
        $\epsilon = 0.007$
        &0.821
        &0.815
        \\
        $\epsilon = 0.008$
        &0.835
        &0.815
        \\
        $\epsilon = 0.009$
        &0.823
        &0.822
        \\
        $\epsilon = 0.01$
        &0.828
        &0.835
        \\
        $\epsilon = 0.05$
        &0.815
        &0.826
        \\
        $\epsilon = 0.1$
        &0.824
        &0.823
    \end{tabular}
    \caption{\texttt{ionosphere} dataset (altered): Values of the area under ROC curve for different values of $\epsilon$.}
    \label{table_mod_io}
\end{table}

\subsection{Variance reduction study}\label{subsection_var_reduction}
\label{section_histogramme}
In a nutshell, the robust framework based on the Wasserstein distance provides a better expected reward but at the expense of a higher computational cost. The risk measure based on the Kullback-Leibler divergence is easily tractable, provides a reduction of the variance in out-of-samples results, but a smaller increase in terms of expected reward (see ~\citet*{gotoh2018robust} for a more detailed theoretical analysis).
In practice, as discussed in ~\citet*{shafieezadeh2015distributionally} and in~\citet*{gotoh2018robust},
``a little of robustness'' typically improves a bit the expected
reward (around 1\%), however results in a larger reduction in terms of variance.
We propose to reproduce such an analysis by means of two experiments using \texttt{ionosphere} dataset. 
We first consider the case when a small training set is used where only 10\% of the data are available. Then we focus on the case when 60\% of the data are used as training set. In both cases, 1000 random realizations are run, when we solve
the classical formulation using logistic regression (LR) loss in Equation \eqref{e:classform},
the formulation in Problem \ref{prop_reformulation_RM}
that uses ambiguity sets defined through the Kullback-Leibler divergence, 
and the formulation in Problem \ref{prop_reformulation_kuhn} 
that uses ambiguity sets defined through the Wasserstein distance.
We then compute the value of the AUC metric on the associated testing set and display results as histograms. 
When we use 10\% of the data for training (Figure~\ref{fig_10_percent}), we see the benefits of our robust solution with respect to the standard LR  classifier. When more data are collected, the probability distribution becomes more accurate and our robust models tend to produce the same outputs as when using classical logistic regression (Figure~\ref{fig_60_percent}).

\begin{figure*}[t!]
    \centering
    \begin{subfigure}[t]{0.5\textwidth}
        \centering
        \includegraphics[scale = 0.5]{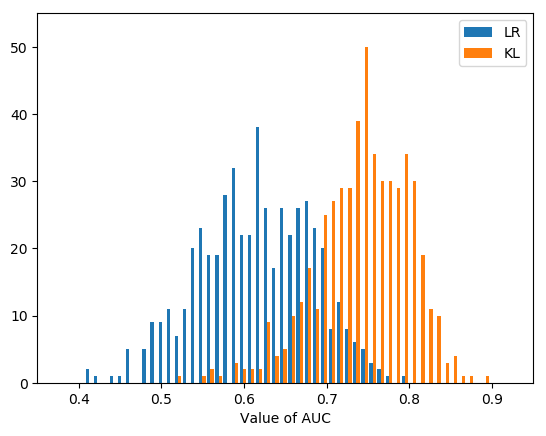}
        \caption{Kullback-Leibler divergence}
    \end{subfigure}%
    ~ 
    \begin{subfigure}[t]{0.5\textwidth}
        \centering
        \includegraphics[scale = 0.5]{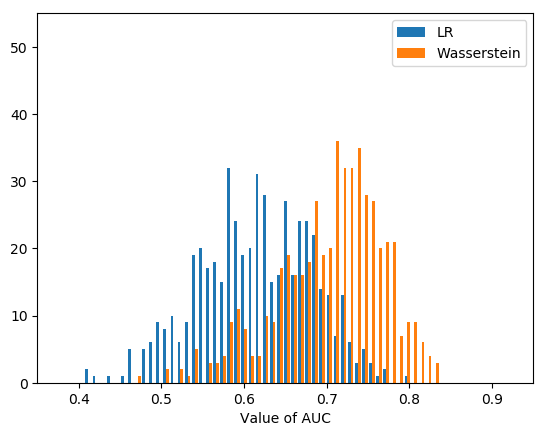}
        \caption{Wasserstein distance}
    \end{subfigure}
    \caption{\texttt{ionosphere} dataset: AUC histogram for 1000 random realizations using 10\% of data for the training set. Robust model is used with $\epsilon = 0.001$.}
    \label{fig_10_percent}
\end{figure*}

%

\begin{figure*}[t!]
    \centering
    \begin{subfigure}[t]{0.5\textwidth}
        \centering
        \includegraphics[scale = 0.5]{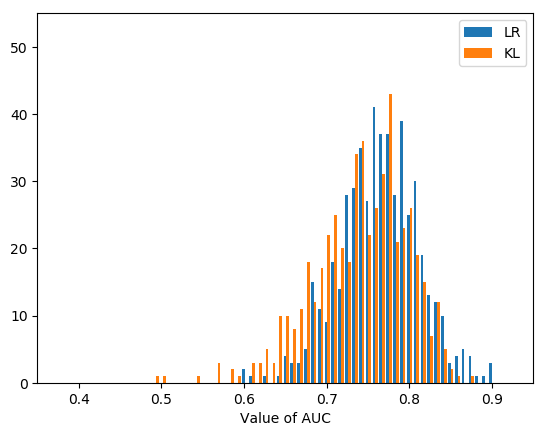}
        \caption{Kullback-Leibler divergence}
    \end{subfigure}%
    ~ 
    \begin{subfigure}[t]{0.5\textwidth}
        \centering
        \includegraphics[scale = 0.5]{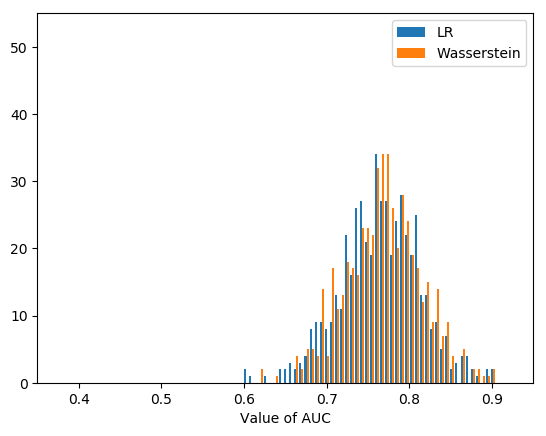}
        \caption{Wasserstein distance}
    \end{subfigure}
    \caption{\texttt{ionosphere} dataset: AUC histogram for 1000 random realizations using 60\% of data for the training set. Robust model is used with $\epsilon = 0.001$.
    }
    \label{fig_60_percent}
    
\end{figure*}

\section{Conclusion} \label{se:conclu}
We have highlighted that risk measures offer versatile tools for addressing 
machine learning problems in a robust manner.
By assuming that the loss function is convex, the related optimization problem
has been recast as a convex one. We have shown that various classes of risk measures, e.g. those based on $\varphi$-divergences
or on the Wasserstein distance, lead to a common convex formulation. In addition, an efficient 
convex optimization algorithm has been proposed to cope with the non trivial constrained problem resulting from this formulation.
We have conducted numerical experiments in which various ambiguity sets are tackled  thanks to the same algorithm. We have also illustrated 
that the considered robust models can outperform classical ones in challenging contexts when the
size of the training set is limited, or when the distribution of labels in the training set is not representative of the reality.

\section*{Acknowledgments} 
The second author wants to thank Université Paris-Est and Labex Bézout for the financial support
particularly for the funding of his PhD program. The work of J.-C. Pesquet was supported by Institut 
Universitaire de France.

\end{document}